\documentclass[11pt,reqno]{amsart}
\usepackage{graphicx, color}
\usepackage{amssymb, amsmath,amsthm}
 
\textwidth = 5.5 in
\textheight = 8.5 in
\parindent = 0.2in
\oddsidemargin = 1cm
\evensidemargin = 1cm

\newtheorem{theorem}{{\bf Theorem}}
\newtheorem{lemma}[theorem]{{\bf Lemma}}
\newtheorem{corollary}[theorem]{{\bf Corollary}}
\newtheorem{proposition}[theorem]{{\bf Proposition}}

\newtheorem{remark}[theorem]{Remark}

\numberwithin{equation}{section}

\begin{document}

%
%

\title[Normalized Entropy versus Volume]
{Normalized entropy versus volume \\
for pseudo-Anosovs}

\author[S. Kojima]{%
    Sadayoshi Kojima
}
\address{%
	Department of Mathematical and Computing Sciences \\
	Tokyo Institute of Technology \\
	Ohokayama, Meguro \\
	Tokyo 152-8552, Japan
}
\email{%
        sadayosi@is.titech.ac.jp
}
\author[G. McShane]{%
	Greg McShane 
} 
\address{%
	UFR de Math\'ematiques \\
	Institut Fourier 100 rue des maths \\
	BP 74, 38402 St Martin d'H\`eres cedex, France
}
\email{%
	Greg.McShane@ujf-grenoble.fr
}
\subjclass[2010]{%
	Primary 57M27, Secondary 37E30, 57M55
}

\keywords{%
	mapping class, 
	entropy, 
	mapping torus, 
	Teichm\"uller translation distance, 
	Weil-Petersson translation distance, 
	hyperbolic volume. 
}

\thanks{%
The first author is partially supported by Grant-in-Aid for Scientific Research (A)
 (No. 18204004), JSPS, Japan.
} 

\begin{abstract} 
Thanks to a recent result by Jean-Marc Schlenker, 
we establish an explicit linear inequality between the normalized entropies 
of pseudo-Anosov automorphisms and the hyperbolic volumes of 
their mapping tori.  
As its corollaries, 
we give an improved lower bound for values of entropies of pseudo-Anosovs 
on a surface with fixed topology, 
and a proof of a slightly weaker version of the result by Farb, Leininger and Margalit first, 
and by Agol later,   
on finiteness of cusped manifolds generating surface automorphisms 
with small normalized entropies.  
Also, 
we present an analogous linear inequality between 
the Weil-Petersson translation distance of 
a pseudo-Anosov map 
(normalized by multiplying the square root of the area of a surface)
and the volume of its mapping torus, 
which leads to a better bound.
\end{abstract}

\maketitle

%
%

\section{Introduction}\label{Sect:introduction}

Let  $\varSigma = \varSigma_{g,m}$ be an orientable surface of genus $g$ with $m$ punctures.
We will suppose that  $3g - 3 + m \geq 1$ so that
$\varSigma$  admits a Riemannian metric of constant curvature  $-1$,  
a hyperbolic structure of finite area, 
which, by Gauss-Bonnet, satisfies
 $\mathrm{Area}\, \varSigma = 2 \pi |\chi(\varSigma)| = 2\pi(2g-2+m)$  
with respect to the hyperbolic metric.  

The isotopy classes of orientation preserving automorphisms of  $\varSigma$, 
called mapping classes,  
were classified into three families
by Nielsen and Thurston  \cite{ThurstonII},
namely  periodic, reducible and pseudo-Anosov.  
Choose a representative  $h$  of a mapping class  $\varphi$, 
and consider its mapping torus, 
\begin{equation*} 
	\varSigma \times [0,1]/ (x, 1) \sim (h(x), 0).    
\end{equation*} 
Since the topology of the mapping torus depends 
only on the mapping class  $\varphi$,  
we denote its topological type by  $N_{\varphi}$  

A celebrated theorem by Thurston  \cite{ThurstonIII}  asserts that 
$N_{\varphi}$  admits a hyperbolic structure iff  $\varphi$  is pseudo-Anosov.  
By Mostow-Prasad rigidity 
a hyperbolic structure of finite volume in dimension 3 is unique 
and geometric invariants are in fact topological invariants.  
In \cite{KKT}, 
Kin, 
Takasawa and the first named author 
compared the hyperbolic volume of  $N_{\varphi}$, 
denoted by  $\mathrm{vol} \, N_{\varphi}$,   
with the entropy of  $\varphi$, 
denoted by  $\mathrm{ent} \, \varphi$.
By \textit{entropy} we mean 
the infimum of the  topological entropy of automorphisms 
isotopic to  $\varphi$.  
In particular, 
they proved 
that there is a constant  $C(g, m) > 0$  depending only on 
the topology of  $\varSigma$  such that 
\begin{equation*} 
	\mathrm{ent} \, \varphi \geq C(g, m) \, \mathrm{vol} N_{\varphi}.   
\end{equation*} 

This result only asserts the  existence of a constant  $C(g, m)$  
since the proof is based on a result of Brock \cite{Brock}  
involving several constants  for which, a priori,
it appears  difficult to compute sharp values. 
On the other hand, it is well known that the  infimum of
 $\mathrm{ent} \, \varphi /\mathrm{vol}\, N_{\varphi}$ is $0$.
In fact, Penner constructed examples   \cite{Penner}
which demonstrate that,  
as the  complexity  of  surface  increases,
the entropy of  a pseudo-Anosov can be arbitrarily close to $0$.
By J{\o}rgensen-Thurston Theory  \cite{ThurstonI},  
the infimum  of volumes of hyperbolic 3-manifolds is strictly positive
so $C(g, m)$  necessarily tends to   $0$  when  $g + m \to \infty$.

Our main theorem  gives an explicit value for   $C(g, m)$

\begin{theorem} \label{Thm:Main} 
The inequality,   
\begin{equation}\label{Eq:Main}  
	{\rm ent} \, \varphi  
	\geq 
	\frac{1}{3 \pi |\chi(\varSigma)|} \, {\rm vol} \, N_{\varphi},  
\end{equation} 
or equivalently,  
\begin{equation}\label{Eq:MainII}  
	2 \pi |\chi(\varSigma)| \, {\rm ent} \, \varphi  
	\geq 
	\frac{2}{3} \, {\rm vol} \, N_{\varphi} 
\end{equation} 
holds for any pseudo-Anosov  $\varphi$.  
\end{theorem}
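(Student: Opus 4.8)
The plan is to connect the entropy of a pseudo-Anosov to the volume of its mapping torus through the intermediate geometry of Teichm\"uller space.  The entropy $\mathrm{ent}\,\varphi$ of a pseudo-Anosov is classically equal to the logarithm of its dilatation, and by Bers this coincides with the Teichm\"uller translation distance $\ell_T(\varphi)$, i.e. the minimal Teichm\"uller distance that $\varphi$ moves a point of Teichm\"uller space.  The mapping torus $N_\varphi$ is then the quotient of the Teichm\"uller geodesic axis of $\varphi$ by the $\mathbb{Z}$-action, so there is a natural fibration of $N_\varphi$ over the circle whose monodromy is $\varphi$.  My first step would therefore be to rewrite $\mathrm{ent}\,\varphi$ as a translation distance on a suitable metric on the Teichm\"uller or moduli space.

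The crucial input, advertised in the abstract as ``a recent result by Jean-Marc Schlenker,'' must be a comparison between this Teichm\"uller-type translation distance and the hyperbolic volume of the mapping torus.  The natural mechanism is to pass through the Weil-Petersson metric: there is a general principle (Brock's work, refined by Schlenker) that the volume of a hyperbolic mapping torus is controlled by the Weil-Petersson translation distance $\ell_{WP}(\varphi)$ of its monodromy.  Specifically I would expect an inequality of the shape
\begin{equation*}
	\mathrm{vol}\,N_\varphi \leq \tfrac{3}{2}\,\sqrt{2\pi|\chi(\varSigma)|}\;\ell_{WP}(\varphi),
\end{equation*}
since the target inequality \eqref{Eq:MainII} reads $2\pi|\chi|\,\mathrm{ent}\,\varphi \geq \tfrac{2}{3}\mathrm{vol}\,N_\varphi$.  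Schlenker's theorem presumably supplies exactly this sharp constant $3/2$ relating renormalized volume (or directly the hyperbolic volume of the fibered manifold) to the Weil-Petersson translation length, improving Brock's coarse quasi-isometry into an honest linear bound with explicit constant.

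The second analytic ingredient compares the two metrics on Teichm\"uller space.  Here I would invoke Linch's inequality (equivalently Wolpert's), which bounds the Weil-Petersson metric by a multiple of the Teichm\"uller (Finsler) metric: pointwise $\|\cdot\|_{WP} \leq \sqrt{2\pi|\chi(\varSigma)|}\,\|\cdot\|_T$.  Integrating along the invariant Teichm\"uller geodesic and minimizing over the orbit gives the translation-distance comparison
\begin{equation*}
	\ell_{WP}(\varphi) \leq \sqrt{2\pi|\chi(\varSigma)|}\;\ell_T(\varphi) = \sqrt{2\pi|\chi(\varSigma)|}\;\mathrm{ent}\,\varphi.
\end{equation*}
Chaining this with Schlenker's volume bound and the identity $\ell_T(\varphi)=\mathrm{ent}\,\varphi$ yields
\begin{equation*}
	\mathrm{vol}\,N_\varphi \leq \tfrac{3}{2}\sqrt{2\pi|\chi|}\cdot\sqrt{2\pi|\chi|}\;\mathrm{ent}\,\varphi = 3\pi|\chi(\varSigma)|\,\mathrm{ent}\,\varphi,
\end{equation*}
which is exactly \eqref{Eq:Main} after dividing through.

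I expect the main obstacle to be establishing the clean constant rather than a mere existence statement.  The delicate point is that both the Schlenker volume bound and the metric comparison must hold with \emph{sharp} constants and must be valid for the relevant translation distances (not just infinitesimally), so one must be careful that passing from the pointwise Finsler/Weil-Petersson comparison to the translation distances along the $\varphi$-axis does not lose anything, and that the Teichm\"uller geodesic representing the axis is genuinely the one realizing $\ell_{WP}$ up to the needed inequality direction.  A secondary subtlety is justifying $\ell_T(\varphi)=\log\lambda(\varphi)=\mathrm{ent}\,\varphi$ precisely, which relies on Bers' theorem that a pseudo-Anosov acts as a translation along a Teichm\"uller geodesic with translation length equal to the logarithm of its dilatation.
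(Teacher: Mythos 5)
Your overall strategy---chain a Schlenker-type volume bound through the Weil--Petersson metric and then compare Weil--Petersson to Teichm\"uller via Linch's inequality $d_{\rm WP}\leq\sqrt{2\pi|\chi(\varSigma)|}\,d_{\rm T}$---does reproduce the constant $3\pi|\chi(\varSigma)|$, and indeed the paper's own remark at the end of its proof section observes that the theorem follows from the Weil--Petersson version (Corollary \ref{Cor:WP}) by exactly this Cauchy--Schwarz comparison of the $L^1$ and $L^2$ norms on $Q(R)$. (The paper itself works directly with the $L^1$ norm and Nehari's bound on the Bers embedding, which is the same computation in different packaging.) So the norm-comparison half of your argument is sound.

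The genuine gap is that you treat as a black box precisely the hard step. Schlenker's theorem is a statement about \emph{quasi-Fuchsian} manifolds: it bounds the renormalized volume ${\rm Rvol}\,QF(X,Y)$, hence the convex core volume ${\rm vol}\,C(X,Y)$ up to an additive constant $D(\varSigma)$, in terms of the distance between the two conformal boundary components. It says nothing directly about the hyperbolic volume of a \emph{fibered} manifold $N_\varphi$, which has no conformal boundary at all. The bridge is the iteration: one applies Corollary \ref{Vol vs T} to the family $QF(\varphi^{-n}X,\varphi^{n}X)$, divides by $2n$ (which kills the additive constant $D$), uses Bers' theorem to identify $\lim_n \frac{1}{2n}d_{\rm T}(\varphi^{-n}X,\varphi^{n}X)$ with ${\rm ent}\,\varphi$, and---this is the substantial point---proves that ${\rm vol}\,C(\varphi^{-n}X,\varphi^{n}X)=2n\,{\rm vol}\,N_\varphi+O(1)$. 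That last asymptotic is Theorem \ref{Thm:Brock}, occupies the entire appendix, and rests on Minsky's bilipschitz models and Brock--Bromberg geometric inflexibility (a deep/shallow decomposition of the convex core and a count of fundamental-domain translates mapped almost isometrically into the infinite cyclic cover $\widetilde{N}_\varphi$). None of this appears in your proposal; writing ``Schlenker's theorem presumably supplies... directly the hyperbolic volume of the fibered manifold'' conflates the quasi-Fuchsian estimate with its fibered consequence, which is the theorem being proved. You also omit the noncompact case: Schlenker's results require $\varSigma$ compact, and the paper handles punctures by a separate covering and Dehn-filling argument.
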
 

The  quantity appearing on the left hand side of  (\ref{Eq:MainII}) 
is often referred to  as the  {\it normalized entropy}.  
The main theorem can thus be restated informally:
``the normalized entropy over the volume is bounded from below 
by a psoitive  constant which does not depend on the topology of  $\varSigma$".

The value of  $C(g,m)$  above does not seem to be 
quite far from the sharp constant.  
For example, 
choose the case that the surface is the punctured torus,
so that  $g = 1,\,m  = 1$  and  $|\chi(\varSigma_{1,1})| = 1$.   
Then the inequality  (\ref{Eq:Main})  becomes 
\begin{equation*} 
	\frac{{\rm ent} \, \varphi}{{\rm vol} \, N_{\varphi}} 
	\geq  \frac{1}{3 \pi} = 0.10610 \dots 
\end{equation*} 
In this particular case, it is conjectured
(see Conjecture 6.10 in \cite{KKT}  with supporting evidence) 
that 
\begin{equation*} 
	\frac{{\rm ent} \, \varphi}{{\rm vol} \, N_{\varphi}} 
	\geq  \frac{\log \frac{3+\sqrt{5}}{2}}{2v_3} 
	= 0.47412 \dots, 
\end{equation*} 
where  $v_3 = 1.01494 \dots$  is the volume of 
the hyperbolic regular ideal simplex.  
The conjectured constant above is known to be attained by 
the figure eight knot complement 
which admits a unique  $\varSigma_{1,1}$-fibration.  

The first application of Theorem \ref{Thm:Main}, 
is an amelioration of the  lower bound,
\begin{equation*} 
	\mathrm{ent} \, \varphi
	\geq 
	\frac{\log 2}{4(3g-3+m)},
\end{equation*} 	
for the entropy of pseudo-Anosovs
on a surface due to Penner\cite{Penner},   
provided that there is at least one puncture.    

\begin{corollary}\label{Cor:EntropyEstimate} 
Let  $\varphi$  be a pseudo-Anosov on  $\varSigma_{g, m}$  
with  $m \geq 1$.   
Then 
\begin{equation*} 
	\mathrm{ent} \, \varphi 	
	\geq 
	\frac{2v_3}{3\pi |\chi(\varSigma)|} 
	= \frac{2v_3}{3\pi(2g-2+m)}.    
\end{equation*} 
\end{corollary}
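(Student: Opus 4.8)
The plan is to deduce the corollary from Theorem \ref{Thm:Main} together with a universal lower bound on the volume of the mapping torus $N_\varphi$. Since inequality (\ref{Eq:Main}) already controls $\mathrm{ent}\,\varphi$ from below by a fixed multiple of $\mathrm{vol}\,N_\varphi$, it suffices to bound $\mathrm{vol}\,N_\varphi$ below by a constant that is independent of $\varphi$.

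First I would record the geometric setup. Because $\varphi$ is pseudo-Anosov, Thurston's hyperbolization theorem (\cite{ThurstonIII}) guarantees that $N_\varphi$ carries a complete hyperbolic structure of finite volume; since $\varphi$ is an orientation-preserving automorphism of the orientable surface $\varSigma$, the manifold $N_\varphi$ is orientable. The hypothesis $m \geq 1$ is exactly what I would exploit next: a puncture of $\varSigma$ sweeps out a cusp in the mapping torus, so $N_\varphi$ is \emph{cusped}, i.e.\ noncompact. Thus $N_\varphi$ belongs to the class of orientable cusped hyperbolic $3$-manifolds, to which a sharp minimal-volume result applies.

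The key input, and the only substantive step, is the theorem of Cao--Meyerhoff that the minimum volume among orientable cusped hyperbolic $3$-manifolds equals $2v_3$, realized by the figure-eight knot complement and its sister. Applying this gives
\begin{equation*}
	\mathrm{vol}\,N_\varphi \geq 2v_3.
\end{equation*}
Substituting into (\ref{Eq:Main}) then yields
\begin{equation*}
	\mathrm{ent}\,\varphi
	\geq
	\frac{1}{3\pi|\chi(\varSigma)|}\,\mathrm{vol}\,N_\varphi
	\geq
	\frac{2v_3}{3\pi|\chi(\varSigma)|}
	=
	\frac{2v_3}{3\pi(2g-2+m)},
\end{equation*}
which is the claimed bound. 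There is no real obstacle here beyond correctly invoking the sharp cusped-volume estimate: once Theorem \ref{Thm:Main} is in hand, the corollary is a one-line consequence, the entire content being the passage from the abstract constant in (\ref{Eq:Main}) to an explicit numerical lower bound via the fact that cusped hyperbolic volume cannot drop below $2v_3$.
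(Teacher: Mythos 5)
Your proposal is correct and follows exactly the paper's own argument: invoke Theorem \ref{Thm:Main}, observe that $m\geq 1$ makes $N_\varphi$ an orientable cusped hyperbolic $3$-manifold, and apply the Cao--Meyerhoff bound $\mathrm{vol}\,N_\varphi \geq 2v_3$. The extra sentence explaining why a puncture forces $N_\varphi$ to be noncompact is a welcome clarification but does not change the route.
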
  

\begin{proof} 
It is known by Cao and Meryerhoff in  \cite{CM}  that 
the smallest volume of an orientable noncompact hyperbolic 
3-manifold is attained by the figure eight knot complement  
and it is  $2v_3$.  
Thus replacing  ${\rm vol} \, N_{\varphi}$  in (\ref{Eq:Main})  by  $2 v_3$, 
we obtain the estimate.   
\end{proof} 

If a manifold admits a fibration over the circle, 
its first Betti number is necessarily positive.  
It is conjectured that the smallest volume of a hyperbolic $3$-manifold 
with positive first Betti number is also  $2v_3$.  
If it were true, 
then we could drop the assumption on the number of punctures  `` $m \geq 1$"  in 
Corollary \ref{Cor:EntropyEstimate}.    

The second application of our main theorem  
is the proof of a slightly weaker form of 
Farb, Leininger and Margalit's 
finiteness theorem for small dilitation pseudo Anosovs.

\begin{corollary}[Farb, Leininger and Margalit \cite{FLM}, Agol \cite{Agol}] 
For any  $C>0$,  
there are finitely many cusped hyperbolic $3$-manifolds 
$M_k$
such that 
any pseudo-Anosov  $\varphi$  on  $\varSigma$  with 
$|\chi(\varSigma)| \, {\rm ent} \, \varphi < C$  
can be realized as 
the monodromy of a fibration 
on a manifold  obtained from one of the $M_k$
by an appropriate Dehn filling.  
\end{corollary}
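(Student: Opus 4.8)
\emph{Proof proposal.} The plan is to convert the hypothesis on normalized entropy into a uniform bound on hyperbolic volume, and then to feed this bound into J{\o}rgensen--Thurston theory. Suppose $\varphi$ is a pseudo-Anosov on some $\varSigma$ with $|\chi(\varSigma)|\,\mathrm{ent}\,\varphi < C$. First I would apply Theorem~\ref{Thm:Main} in the form (\ref{Eq:MainII}): since $\frac{2}{3}\,\mathrm{vol}\,N_{\varphi} \le 2\pi|\chi(\varSigma)|\,\mathrm{ent}\,\varphi < 2\pi C$, every such mapping torus satisfies $\mathrm{vol}\,N_{\varphi} < 3\pi C =: V$. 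Note that $N_{\varphi}$ is genuinely hyperbolic by Thurston's theorem \cite{ThurstonIII}, since $\varphi$ is pseudo-Anosov, so its volume is defined and all these manifolds lie below the single threshold $V$, \emph{independently of the topology of $\varSigma$}.

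Next I would invoke J{\o}rgensen--Thurston theory \cite{ThurstonI}. For the fixed bound $V$ there are only finitely many cusped orientable hyperbolic $3$-manifolds of volume at most $V$, and, by the geometric (Gromov--J{\o}rgensen) compactness that underlies the theory, one may select among them a finite list $M_1,\dots,M_n$ with the property that every complete finite-volume orientable hyperbolic $3$-manifold of volume less than $V$ is obtained from some $M_k$ by Dehn filling a (possibly empty) subset of its cusps. Applying this to the manifolds $N_{\varphi}$ produced in the previous step, each $N_{\varphi}$ is a Dehn filling of one of the $M_k$.

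It then remains only to read off the conclusion. By construction the mapping torus carries the fibration $\varSigma \hookrightarrow N_{\varphi} \to S^{1}$ whose monodromy is exactly $\varphi$; combined with the previous paragraph, this exhibits $\varphi$ as the monodromy of a fibration on a manifold, namely $N_{\varphi}$, obtained from one of the finitely many $M_k$ by an appropriate Dehn filling. One technical point to handle carefully is that $N_{\varphi}$ may be closed (when $\varSigma$ is closed) or already cusped (when $\varSigma$ has punctures); in the latter case the ``appropriate Dehn filling'' must be allowed to leave some cusps unfilled, so that a volume-minimal $N_{\varphi}$ can coincide with its parent $M_k$.

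The step I expect to carry the real weight is the precise invocation of J{\o}rgensen--Thurston in the second paragraph: one must use not merely the finiteness of cusped manifolds of volume at most $V$, but the stronger statement that finitely many of them \emph{generate}, under Dehn filling, all orientable hyperbolic $3$-manifolds of volume below $V$. This is exactly where the argument is soft, and correspondingly why it yields only the weaker conclusion as compared with Farb--Leininger--Margalit \cite{FLM} and Agol \cite{Agol}: it locates the mapping torus $N_{\varphi}$ among finitely many Dehn-filling families but says nothing about how the fibration of $N_{\varphi}$ is induced from a fibration of the parent $M_k$, nor does it pin down a common fibered face. Recovering that finer structure would require, in addition, Fried's continuity of normalized entropy over a fibered cone together with McMullen's Teichm\"uller polynomial, tools that the present volume estimate deliberately bypasses.
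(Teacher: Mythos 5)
Your proposal is correct and follows essentially the same route as the paper: bound $\mathrm{vol}\,N_{\varphi}$ via Theorem \ref{Thm:Main} and then invoke J{\o}rgensen--Thurston finiteness under a volume bound. The only cosmetic difference is that the paper briefly unwinds the mechanism behind that finiteness (the thick--thin decomposition, with $N_{\varphi}$ recovered from its thick part by Dehn filling the boundary tori), whereas you cite the generation-by-Dehn-filling statement as a black box; your closing remarks about why this yields only the weaker conclusion match the paper's own caveat.
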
 

We note that Farb, Leininger and Margalit are able to obtain 
in addition that the $M_k$ are in fact fibered and the 
surgeries (fillings) are along suspensions of punctures  of the fiber.

\begin{proof} 
If  $|\chi(\varSigma)| \, {\rm end}\, \varphi$  is bounded 
from above by a constant  $C$, 
then it certainly bounds the volume of  $N_{\varphi}$  
by Theorem \ref{Thm:Main}.
Recall that the thin part of $N_{\varphi}$  
consists of neighborhoods of (rank 2) cusps 
and Margulis tubes around short geodesics
all pairwise disjoint.
Thus the boundary of the \textit{thick part},
that is  the complement of the thin part,
consists of finitely many tori
and $N_\varphi$ is obtained 
from the thick part by Dehn filling these.
The volume of  $N_{\varphi}$ bounds the volume of the 
thick part and,
 using a covering by (finitely many) metric balls, 
Jorgensen and Thurston \cite{ThurstonI}
have shown that there are only  finitely many possibilities 
for its  topological type.
\end{proof}

Finally, replacing the entropy by Weil-Petersson translation distance 
in the proof of Theorem \ref{Thm:Main}, 
yields  an explicit value for the constant 
appearing in the upper bound
for volume of the mapping torus in Brock's Theorem 1.1
 \cite{Brock}.

\begin{corollary} \label{Cor:WP} 
If  $\varSigma$  is compact, 
then, 
the inequality,   
\begin{equation*}
	|| \varphi ||_{{\rm WP}}  
	\geq 
	\frac{2}{3 \sqrt{2\pi |\chi(\varSigma)|}} \, {\rm vol} \, N_{\varphi},  
\end{equation*} 
or equivalently,  
\begin{equation*}
	\sqrt{2 \pi |\chi(\varSigma)|} \, || \varphi ||_{{\rm WP}}   
	\geq 
	\frac{2}{3} \, {\rm vol} \, N_{\varphi} 
\end{equation*} 
holds for any pseudo-Anosov  $\varphi$,  
where  $|| \cdot ||_{{\rm WP}}$  is the Weil-Petersson 
translation distance of  $\varphi$.  
\end{corollary}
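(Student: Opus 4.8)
The plan is to run the proof of Theorem \ref{Thm:Main} with the Weil--Petersson distance kept in place of the Teichm\"uller distance throughout, to feed in Schlenker's explicit estimate for the Weil--Petersson metric, and to omit the final comparison $d_{\rm WP}\le\sqrt{2\pi|\chi(\varSigma)|}\,d_T$ by which the Weil--Petersson statement was converted into the entropy statement. Stopping one step earlier yields the inequality recorded here, which in turn implies Theorem \ref{Thm:Main} through that same comparison. The hypothesis that $\varSigma$ be compact enters precisely because Schlenker's inequality is available for quasi-Fuchsian manifolds over closed surfaces.

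First I would recall that on the Weil--Petersson completion $\overline{\mathcal T(\varSigma)}$, which is CAT(0), a pseudo-Anosov $\varphi$ acts as an axial isometry, so that its translation distance is a stable length:
\begin{equation*}
  || \varphi ||_{\rm WP} \;=\; \lim_{n\to\infty}\frac{1}{n}\, d_{\rm WP}\big(X,\varphi^{n}(X)\big)
\end{equation*}
for any fixed $X\in\mathcal T(\varSigma)$. Next I would invoke Schlenker's theorem, which makes Brock's comparison explicit: for the quasi-Fuchsian manifold $Q(X,Y)$ with conformal boundary $X,Y$, the volume of its convex core $C(Q(X,Y))$ obeys
\begin{equation*}
  \mathrm{vol}\, C\big(Q(X,Y)\big) \;\le\; \frac{3}{2}\sqrt{2\pi|\chi(\varSigma)|}\; d_{\rm WP}(X,Y) \;+\; c,
\end{equation*}
with $c$ an additive constant independent of $X,Y$. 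Applying this with $Y=\varphi^{n}(X)$ is the heart of the matter.

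The third ingredient is the geometric-limit relation underlying Brock's Theorem 1.1: as the approximating quasi-Fuchsian manifolds $Q(X,\varphi^{n}(X))$ degenerate, their convex cores exhaust $n$ period-copies of the doubly degenerate cyclic cover of $N_\varphi$, whence
\begin{equation*}
  \mathrm{vol}\, N_\varphi \;\le\; \lim_{n\to\infty}\frac{1}{n}\,\mathrm{vol}\, C\big(Q(X,\varphi^{n}(X))\big).
\end{equation*}
Combining the three displays, dividing Schlenker's inequality by $n$ and letting $n\to\infty$ (so that the additive constant $c$ disappears), I obtain
\begin{equation*}
  \mathrm{vol}\, N_\varphi \;\le\; \frac{3}{2}\sqrt{2\pi|\chi(\varSigma)|}\; || \varphi ||_{\rm WP},
\end{equation*}
which is exactly the asserted inequality after dividing by $\tfrac{3}{2}\sqrt{2\pi|\chi(\varSigma)|}$.

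I expect the main obstacle to be the second and third steps in tandem: one must be sure that Schlenker's estimate is applied to the correct geometric quantity --- the convex-core volume of the quasi-Fuchsian approximants --- with the stated explicit constant, and that the renormalized growth rate of these volumes genuinely recovers $\mathrm{vol}\, N_\varphi$ rather than a multiple of it. The latter is supplied by Brock's limiting machinery, so the real care lies in matching normalizations, in particular in reading $\sqrt{2\pi|\chi(\varSigma)|}$ as $\sqrt{\mathrm{Area}\,\varSigma}$ so that Schlenker's constant aligns with the factor $\tfrac{2}{3}$ sought in the statement.
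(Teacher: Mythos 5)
Your proposal is correct and follows essentially the same route as the paper: it substitutes Schlenker's explicit Weil--Petersson bound on the (renormalized) volume for the Teichm\"uller-distance bound, applies it to the quasi-Fuchsian manifolds interpolating iterates of $\varphi$, and passes to the limit using the Brock-type convergence of convex-core volumes to $\mathrm{vol}\,N_\varphi$ per period (the paper uses the symmetric family $QF(\varphi^{-n}X,\varphi^{n}X)$ normalized by $2n$, but this is immaterial). The only cosmetic difference is that the paper re-derives Schlenker's constant via the norm comparison $\|q\|_2^2\le\|q\|_1\|q\|_\infty\le 2\pi|\chi(\varSigma)|\,\|q\|_\infty^2$, whereas you quote the resulting inequality directly.
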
 

It was pointed out to the authors by McMullen that 
there is a family of pseudo-Anasov automorphisms $\varphi_k$
such that $|| \varphi_k ||_{{\rm WP}}$ are bounded whilst the entropy
of $\varphi_k$,
which is just the translation distance for the Teichm\"uller metric,
diverges.
In fact, one can construct such examples 
so that the mapping tori $N_{\varphi_k}$
converge to a cusped hyperbolic 3-manifold
(though this limit may not be a surface bundle).
This can be interpreted as showing 
that  the relationship between volume 
and Weil-Petersson  distance is stronger 
than that with the Teichm\"uller distance.
Indeed, Brock has shown that there a lower bound for volume
in terms of $|| \varphi ||_{\rm WP}$
but the method that we present does not as yet extend to prove this.

The proof of Theorem \ref{Thm:Main} has two main ingredients:
The first is  Krasnov and Schlenker's recent results  on the renormalized
volumes of quasi-Fuchsian manifolds  \cite{KS, Schlenker};  
The second is the work of 
McMullen  \cite{McMullen} and Brock-Bromberg \cite{BB}
on geometric inflexibility   to obtain 
convergence in Thurston's Double Limit Theorem.
In the next section we review  briefly the requisite results  of Krasnov and Schlenker
and obtain an intermediate inequality (Corollary \ref{Vol vs T})
between the volume of the  convex core  of a quasi-Fuchsian manifold
and Teichm\"uller distance.
In the proceeding section, 
we prove Theorem \ref{Thm:Main} and Corollary \ref{Cor:WP}.  
The last section is an appendix where 
we present a simplified exposition 
of the ideas behind geometric inflexibility and 
a proof of the convergence result stated in Brock \cite{Brock}  which 
leads to a proof of the main theorem.  
\medskip 

{\bf Acknowledgement:} 
The authors are indebted to 
Ian Agol, 
Martin Bridgemann, 
Jeff Brock, 
Ken Bromberg, 
Dan Margalit, 
Curt McMullen, 
Kasra Rafi,
Jean-Marc Schlenker 
and Juan Souto 
for their valuable suggestions,  comments and encouragement
without which  this  paper
might never have been completed.

%
%

\section{preliminaries}


\subsection{Differentials}

Let  $R$  be a Riemann surface and 
 $T^{1,0}R$  and  $T^{0,1}R$  denote  respectively
 the holomorphic 
and the anti-holomorophic parts of the complex 
cotangent bundle,  
a canonical bundle,   over  $R$  respectively.  

A quadratic differential  $q$  on  $R$  locally expressed by  $q(z)dz^2$  is 
a section of the line bundle  $(T^{1,0}R)^{\otimes 2}$.  
A Beltrami differential  $\mu$  on  $R$  locally expressed by  
$\mu(z) \frac{d \bar{z}}{dz}$  
is a section of the line bundle  $T^{0,1}R \otimes (T^{1,0})^*R$
They are main players in  Teichm\"uller theory.  
A Beltrami differential  $\mu$  can be interpreted as a representative of 
an infinitesimal deformation of complex structure on  $R$.  
Hence,  
to each $\mu$  there is a corresponding tangent vector to  Teichm\"uller space  
$\mathcal{T} = \mathcal{T}_{g, m}$  at  $R$, 
however,
there is  an  infinite dimensional subspace that represents the trivial deformation
 so that the correspondence is not injective.
We now describe a construction  which allows one to eliminate this ambiguity.

Let  $Q(R)$  be the space of {\it holomorphic} quadratic differentials.
By Riemann-Roch, 
the dimension of  $Q(R)$  is  $3g-3+m$, 
that is, equal to that of  $\mathcal{T}$.  
We define  the $L^1$-norm on $Q(R)$  by
\begin{equation*} 
	||q||_1 = \int_R |q|.
\end{equation*} 
We note that, since it is finite dimensional, all norms on $Q(R)$  are equivalent
and we shall compare the  $L^1$-norm to another norm, tne $L^\infty$-norm,
in Paragraph \ref{comparing norms}.

Given a Betrami differential $\mu$  and  a quadratic differential $q$ the product of  
$\mu$  and  $q$  is a section of the line bundle 
$T^{0,1}R \otimes T^{1,0}R$, 
and there is a natural pairing defined by 
\begin{equation*} 
	(q, \, \mu) = \int_R \mu q.  
\end{equation*} 
Let  $L_{\infty}(R)$  be the space of 
uniformly bounded Beltrami differentials with respect to the norm, 
\begin{equation*} 
	|| \mu ||_{\infty} = \sup_{\| q  \|_1 = 1} |(q, \, \mu)|, 
\end{equation*} 
namely,   
$L_{\infty}(R) = \{ \mu \, ; \, ||\mu||_{\infty} < \infty \}$.  
Define $K$ to be the subspace of $ L_{\infty}(R) $
\begin{equation*} 
	K = \{ \mu \, ; \, (q, \, \mu) = 0 \; \; \text{for all $q \in Q(R)$} \}.  
\end{equation*} 
Then  $L_{\infty}(R)/K$  can  be identified with the tangent space  
$T_R \mathcal{T}$  of the Teichm\"uller space  $\mathcal{T}$  at  $R$, 
and moreover 
the pairing induces an isomorphism of  $Q(R)^*$ 
with  $L_{\infty}(R)/K \cong T_R \mathcal{T}$.  
Thus, 
$Q(R)$  can be regarded as a cotangent space of  $\mathcal{T}$  at  $R$, 
$T^*_R \mathcal{T}$,    
and  $( \; , \; )$  induces the duality pairing, 
\begin{equation}\label{Eq:DualityPairing}  
	( \; , \; ) : Q(R) \times L_{\infty}(R)/K \to \mathbb{C}.  
\end{equation}


\subsection{Projective Structures} 

A projective structure on a surface  $\varSigma$  is 
a special type of complex structure being
locally modeled on the geometry of the complex projective line
$(\widehat{\mathbb{C}}, \, \mathrm{PSL}(2, \mathbb{C}))$, 
where  $\widehat{\mathbb{C}} = \mathbb{C} \cup \{ \infty \}$ 
is the  Riemann sphere.  
The projective structure on  $\varSigma$  
is given by  an atlas such that each transition map is
the restriction of some element in
$\mathrm{PSL}(2, \mathbb{C})$.
Clearly these transition maps are holomorphic 
and so there is a unique complex structure 
naturally associated to a given projective structure.
Let  $X$  be a surface homeomorphic to  $\varSigma$  together with 
a projective structure, 
and let  $R$  be its underlying Riemann surface. 
When  $\chi(\varSigma) < 0$ 
there is a bijection between projective structures 
and holomorphic quadratic differentials.
To see this, 
recall that by the Uniformization Theorem 
the universal cover of  $X$  can be 
identified with the Poincar\'e disk  $\mathbb{D} = \{ z \in \mathbb{C} \, ; \, |z| < 1 \}$.  
The developing map  $f : \widetilde{X} \to \widehat{\mathbb{C}}$  
can be  regarded as a meromorphic function on  $\mathbb{D}$.  
Now  the Schwarzian derivative  $S(f)$  of  $f$  defines 
a holomorphic quadratic differential  $q$  on  $R$.  
Conversely, 
given a holomorphic quadratic differential  $q$  on  $R$,  
the Schwarzian differential equation 
\begin{equation*} 
	S(f) = q 
\end{equation*} 
has a solution 
which gives rise to a developing map 
of some complex projective structure on  $R$.  
Thus, 
there is a one-to-one correspondence between 
the set of all complex projective structures on  $R$  and 
$Q(R)$.


\subsection{Quasi-Fuchsian Manifolds}  

A quasi-Fuchsian group is defined to be 
a discrete subgroup  $\Gamma$  of  ${\rm PSL}(2, \mathbb{C})$  
such that its limit set  $L_{\Gamma}$  of  $\Gamma$  on 
the boundary  $\partial \mathbb{H}^3 = S^2_{\infty}$  
is either a circle or a quasi-circle (an embedded copy of the circle  
with Hausdorff dimension strictly greater than 1).  
This definition implies many consequences.  
For example, 
the domain of discontinuity  
$\Omega_{\Gamma} = S^2_{\infty} - L_{\Gamma}$  consists of 
exactly two simply-connencted domains,   
denoted by  $\Omega_{\Gamma}^+$  and  $\Omega_{\Gamma}^-$,   
the quotients of  $\Omega_{\Gamma}^{\pm}$  by  $\Gamma$  
are the same 2-orbifold  $O$  but with opposite orientations,  
and  $\mathbb{H}^3/\Gamma$  is a geometrically finite 
hyperbolic 3-orbifold homeomorphic to  $O \times \mathbb{R}$.  
If  $\Gamma$  is torsion free, 
then  $O$  becomes a surface  $\varSigma$  
and, in particular, 
$\Gamma$  is isomorphic to the fundamental group of  $\varSigma$.
In this case we say that
the quotient of  $\mathbb{H}^3$  by the  quasi-Fuchsian group  $\Gamma$
is a \textit{quasi-Fuchsian manifold.}

The action of  $\Gamma$  on  $\Omega_{\Gamma}$  is holomorphic 
so  
$X = \Omega_{\Gamma}^{+}/\Gamma$  and 
$Y = \Omega_{\Gamma}^{-}/\Gamma$
are  marked Riemann surfaces.  
Thus, 
there is a well defined map taking
a quasi-Fuchsian manifold  $\mathbb{H}^3/\Gamma$ to 
a pair of marked Riemann surfaces  $X, \, Y$  in 
the Teichm\"uller space  $\mathcal{T}$  of  $\varSigma$.  
In  \cite{BersI}  Bers showed  
that this map has an inverse.
He obtains as a corollary  a parametrization of 
the set of quasi-Fuchsian manifolds  
by the $\mathcal{T} \times \overline{\mathcal{T}}$.  
In other words, 
for any  pair $(X,  Y) \in \mathcal{T} \times \overline{\mathcal{T}}$,  
there is a unique quasi-Fuchsian manifold  $QF(X, Y)$.  
As noted above, 
the limit set of a quasi-Fuchsian group is either a circle or a quasi-circle,
and the quotient of its convex hull in  $\mathbb{H}^3$  by  $\Gamma$, 
denoted $C(X, Y)$, is homemorphic to  
$\varSigma$  or  $\varSigma \times [0, \, 1]$  accordingly, 
called the \textit{convex core}.   
It is known to be the smallest convex subset   
homotopy equivalent to  $QF(X, Y)$.  

Since the action of a quasi-Fuchsian group  $\Gamma$ on  $\Omega_{\Gamma}$  is 
linear fractional,  
the Riemann surfaces  $X$ and $Y$  are equipped 
not only with complex structures
but also with complex projective structures.  
Thus we have associated holomorphic quadratic differentials   
$q_X$  and  $q_Y$.    
Let  $q$  denote the  unique holomorphic quadratic differential on  
$X \sqcup Y$  
such that its restriction to  $X$  is  $q_X$  
and to  $Y$  is  $q_Y$.  

The notations  $q_X, \, q_Y$  may be a bit misleading 
since they both could vary even if one of complex structures  
of  $X$  or  $Y$  stays constant.  
However, 
as long as discussing quasi-Fuchsian deformations, 
we regard  $q_X$  as a complex projective structure of  
the Riemann surface on the left and  $q_Y$  on the right.  
This convention would resolve confusion of notations.  

\vspace{.2in}
\begin{figure}[h]
\begin{center}
\includegraphics[scale=.4]{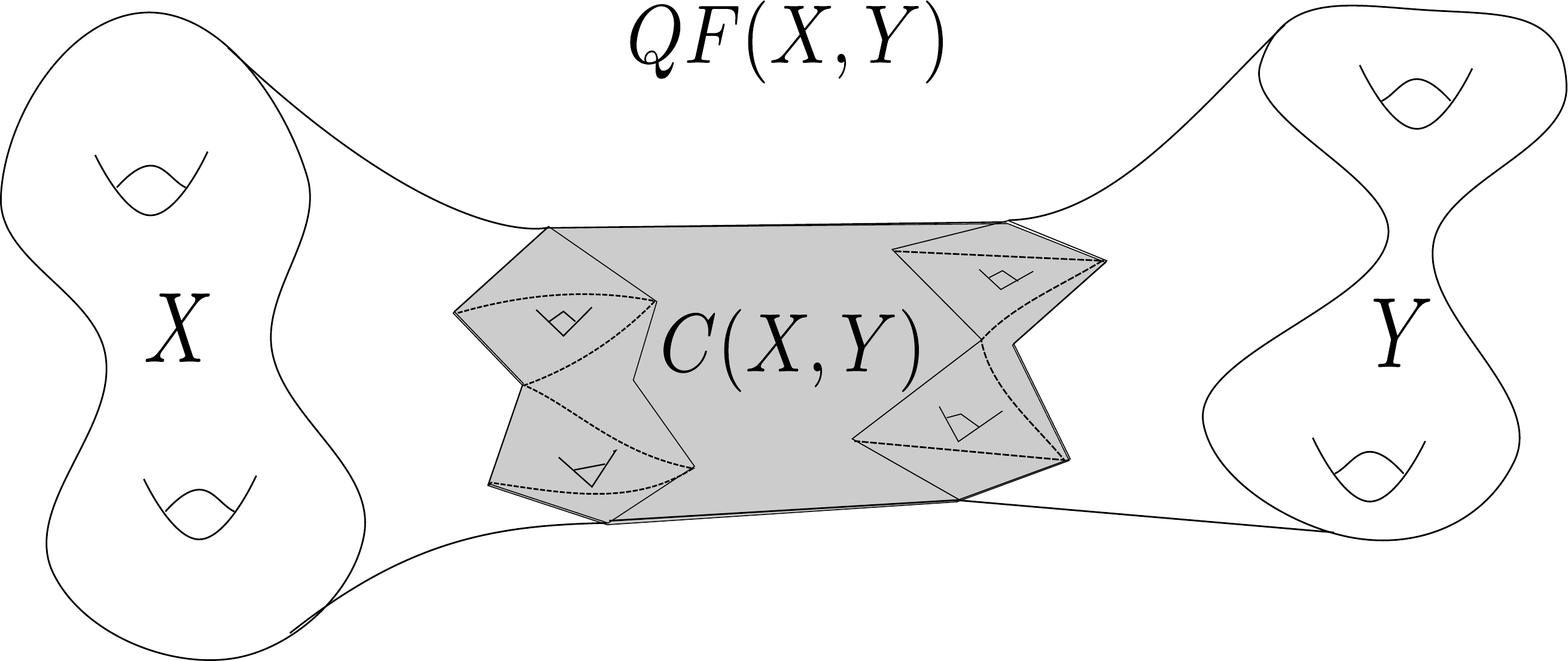} 
\end{center}
\caption{The quasi-Fuchsian manifold, the surfaces at infinity and the convex core.}
\end{figure}


\subsection{Renormalization of Volume}  

Renormalization of the volume of convex cocompact hyperbolic 3-manifolds 
were studied extensively by Krasnov and Schlenker in \cite{KS}.
In the following paragraphs we recall Krasnov and Schlenker's
results focusing on quasi-Fuchsian case.
Note that the surface at infinity  
 $\Omega_{\Gamma}/\Gamma$ has 2 connected components.

Throughout the rest of this section, 
we assume that  $\varSigma$  is compact.  
Let  $M$  be a quasi-Fuchsian manifold  $\mathbb{H}^3/\Gamma$  homeomorphic 
to  $\varSigma \times \mathbb{R}$.  
Following \cite{KS} we say that a
codimension-zero smooth compact convex 
submanifold  $N \subset M$  is \textit{strongly convex} if
the normal hyperbolic Gauss map 
from  $\partial N = \partial_+ N \sqcup \partial_- N$  
to the 
boundary at infinity  $\Omega_{\Gamma}/\Gamma$ is a homeomorphism.   
For example, 
a closed $\varepsilon$-neighborhood of the convex core of 
a quasi-Fuchsian manifold is strongly convex.   
Let $S_0 = \partial N$ 
then there is a family of  surfaces  $\{ S_r \}_{r \geq 0}$   
equidistant to $S_0$ foliating the ends of  $M$. 
If $g_r$ denotes the induced metric on   $S_r$,   
then define a metric at infinity associated 
to the family $\{ S_r \}_{r \geq 0}$  by
\begin{equation*} 
	g = \lim_{r \to \infty} 2 e^{-2r} \, g_r.  
\end{equation*}  
The resulting metric  $g$  in fact belongs to the \textit{conformal class at infinity}
that is the conformal structure 
determined by the complex structure on $\Omega_{\Gamma}/\Gamma$.  
It is easy to see that if we start with a strongly convex submanifold bounded 
by  $S_{r_0}$  for some  $r_0 > 0$, 
then the limiting metric is  $e^{2r_0}g$.  
Namely, 
if we shift the parametrization of an equidistant foliation by  $r_0$, 
then the limiting metric changes only by scaling  $e^{2r_0}$.  

Conversly,  
if $g$  is a Riemannian metric in the conformal class at infinity,  
then Theorem 5.8 in  \cite{KS}  shows that 
there is a unique foliation of the ends of $M$ by equidistant surfaces 
with compatible parametrization of leaves starting  $r_0 \geq 0$  
so that the associated metric at infinity is equal to  $g$.  
Notice that the parametrization may have to start with a positive  $r_0$.    
The construction of a foliation is due to Epstein  \cite{Epstein}.  
Then, 
a natural quantity to study in the context  
of strongly convex submanifolds  $N \subset M$ is  the \textit{W-volume} 
defined by 
\begin{equation}\label{Eq:W-volume}  
	 W(M, g):= \mathrm{vol} \, N_r - \frac{1}{4} \int_{S_r} H_r \, da_r + \pi r \chi(\partial M),  
\end{equation} 
where the parametrization  $r$  is induced by  $g$, 
$N_r$  is a strongly convex submanifold bounded by the associated leaf  $S_r$,   
$H_r$  is the mean curvature of  $S_r$  and  $da_r$  is the induced area form of  $S_r$.  
A simple computation
which can be found in \cite{Schlenker}  shows
that  the W-volume depends only on the metric at infinity  $g$,   
justifying the notation.  

The renormalized volume of  $M$  is now defined by  
\begin{equation*} 
	\mathrm{Rvol}(M) := \sup_{g} W(M, g),   
\end{equation*}  
where the supremum is taken over all metrics $g$ 
in the conformal class at infinity 
such that the area of each surface at infinity
$\Omega_{\Gamma}/\Gamma$ with respect to  $g$  is  $2\pi |\chi(\varSigma)|$.  
Section 7 in  \cite{KS}  presents an argument,
based on the variational formula stated 
in Corollary 6.2 in \cite{KS},  
that the supremum is in fact uniquely attained by 
the metric of constant curvature  $-1$.

We can now state, in a slightly modified form,
that one half of Theorem 1.1 in  \cite{Schlenker}  is as follows.  

\begin{theorem}[Theorem 1.1 in \cite{Schlenker} and its revised version]\label{Thm:Vol vs Rvol}  
Assume that  $\varSigma$  is compact.  
Then, 
there exists a constant  $D = D(\varSigma) > 0$  depending only on 
the topology of  $\varSigma$  such that 
the inequality,  
\begin{equation}\label{Eq:Vol vs Rvol}  
	\mathrm{vol} \, C(X, Y) \leq \mathrm{Rvol} \, QF(X, Y) + D,  
\end{equation} 
holds for any  $X, \, Y \in \mathcal{T}$.  
\end{theorem}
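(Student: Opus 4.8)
The plan is to realize $\mathrm{vol}\,C(X,Y)$ itself as a $W$-volume and to compare it with the renormalized volume, which by definition is the supremum of $W(M,\cdot)$ over area-normalized conformal metrics at infinity. Write $M=QF(X,Y)$ and $C=C(X,Y)$, and recall that $\mathrm{Rvol}(M)=\sup_g W(M,g)\ge W(M,g)$ for every conformal metric $g$ each of whose ends has area $2\pi|\chi(\varSigma)|$. The idea is that the foliation by surfaces equidistant from $\partial C$ has a metric at infinity $g_{cc}$ whose $W$-volume differs from $\mathrm{vol}\,C$ only by the total bending of $\partial C$, and that, after correcting for the fact that $g_{cc}$ is not area-normalized, $W(M,g_{cc})$ is dominated by $\mathrm{Rvol}(M)$.

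First I would evaluate (\ref{Eq:W-volume}) on the foliation emanating from $\partial C$. Since $\partial C$ is only pleated, I would run the computation on the smooth, strongly convex boundaries $\partial N_\varepsilon$ of the $\varepsilon$-neighbourhoods $N_\varepsilon$ of $C$ and let $\varepsilon\to0$. The faces of the pleated surface are totally geodesic, so their mean curvature vanishes, while each bending line of exterior angle $\theta$ contributes a concentrated term $\theta\,d\ell$; hence in the limit $\int_{\partial N_\varepsilon}H\,da$ converges to the total bending $L_\beta:=\int_{\partial C}H\,da\ge0$. Parametrizing the foliation by distance from $\partial C$, so that the $r$-term in (\ref{Eq:W-volume}) vanishes at $\partial C$, this yields
\[
	\mathrm{vol}\,C = W(M,g_{cc}) + \tfrac14 L_\beta,
\]
where $g_{cc}$ is the metric at infinity of this foliation. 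It therefore suffices to bound $L_\beta$ from above and to compare $W(M,g_{cc})$ with $\mathrm{Rvol}(M)$.

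For the bending I would quote the classical bound, depending only on the topology of $\varSigma$, for the total bending of the convex-core boundary of a quasi-Fuchsian manifold; this controls $\tfrac14 L_\beta$. Comparing $W(M,g_{cc})$ with $\mathrm{Rvol}(M)$ is the delicate point, since $g_{cc}$ need not be the hyperbolic representative and its two ends need not have area $2\pi|\chi(\varSigma)|$. I would rescale $g_{cc}$ on each end to area $2\pi|\chi(\varSigma)|$; by the parameter-shift relation following (\ref{Eq:W-volume}) this changes $W$ by $-\pi\sum_{\pm} r_0^{\pm}\chi(\varSigma)$, and the rescaled metric $\tilde g_{cc}$ is admissible, so $W(M,g_{cc})=W(M,\tilde g_{cc})+\pi\sum_{\pm} r_0^{\pm}\chi(\varSigma)\le\mathrm{Rvol}(M)+\pi\sum_{\pm} r_0^{\pm}\chi(\varSigma)$. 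To bound the shifts $r_0^{\pm}$ I would use Epstein's description \cite{Epstein} of $g_{cc}$ through the fundamental forms of $\partial C$: up to a universal factor its area form is $(1+H+K_{\mathrm{ext}})\,da_{\partial C}$, and by Gauss--Bonnet together with the Gauss equation $K_{\mathrm{ext}}=K_{\mathrm{int}}+1$ one finds $\int_{\partial C}K_{\mathrm{ext}}\,da=0$, so the area of each end of $g_{cc}$ is comparable to $2\pi|\chi(\varSigma)|+L_\beta$. With $L_\beta$ already bounded, these areas are pinched between positive multiples of $|\chi(\varSigma)|$, so the shifts $r_0^{\pm}$, and hence the correction, are bounded in terms of topology.

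Putting the pieces together,
\[
	\mathrm{vol}\,C = W(M,g_{cc})+\tfrac14 L_\beta \le \mathrm{Rvol}(M)+\pi\sum_{\pm}r_0^{\pm}\chi(\varSigma)+\tfrac14 L_\beta \le \mathrm{Rvol}(M)+D,
\]
with $D=D(\varSigma)>0$ depending only on the topology of $\varSigma$, which is (\ref{Eq:Vol vs Rvol}). I expect the main obstacle to lie in this conformal/area comparison rather than in the volume bookkeeping: making the limiting identity precise as $\varepsilon\to0$ and controlling the areas of the ends of $g_{cc}$, equivalently the correction term, rest on the variational and conformal-rescaling formulas of Krasnov--Schlenker \cite{KS} and on the bending bound, after which the remaining estimates are routine.
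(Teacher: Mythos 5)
The paper offers no proof of Theorem \ref{Thm:Vol vs Rvol}: it is imported from Schlenker \cite{Schlenker} (``Theorem 1.1 and its revised version''), and the authors only restate it in the normalization they need. So there is no internal argument to compare against; what your proposal does is reconstruct, correctly in outline, the proof from the cited source. The skeleton is right: evaluating (\ref{Eq:W-volume}) on the foliation equidistant from $\partial C$ gives $\mathrm{vol}\,C = W(M,g_{cc}) + \tfrac14 L_\beta$, and the $\varepsilon\to 0$ step is actually painless because $\mathrm{vol}\,N_\varepsilon - \tfrac14\int_{\partial N_\varepsilon}H\,da + \pi\varepsilon\chi(\partial M)$ is independent of $\varepsilon$ (this is exactly the statement that $W$ depends only on the metric at infinity), so no separate continuity argument is required. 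Also note that with the paper's definition of $\mathrm{Rvol}$ as a supremum over area-normalized metrics in the conformal class, the inequality $W(M,\tilde g_{cc})\le \mathrm{Rvol}(M)$ is immediate by definition, which spares you the maximization argument of Section 7 of \cite{KS}.

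Two inputs you treat as routine should be named, since the theorem fails without them. First, the ``classical bound'' on the total bending is Bridgeman's theorem (cf.\ \cite{Bridgeman} and Bridgeman--Canary) that the length of the bending lamination of $\partial C(X,Y)$ is at most a universal constant times $|\chi(\varSigma)|$; this is a genuine theorem, not folklore, and it is the only source of a purely topological bound on $\tfrac14 L_\beta$. Second, the area normalization: your computation via Epstein's description \cite{Epstein} (area form $\tfrac12(1+H+K_{\mathrm{ext}})\,da_{\partial C}$ with the paper's normalization $g=\lim 2e^{-2r}g_r$, and $\int_{\partial C}K_{\mathrm{ext}}\,da=0$ in the limit by Gauss--Bonnet) pins the area of each end of $g_{cc}$ between $\pi|\chi(\varSigma)|$ and $\pi|\chi(\varSigma)|+\tfrac12 L_\beta$, so the shifts $r_0^{\pm}$ are bounded by universal constants and the correction $\pi\sum_{\pm}r_0^{\pm}\chi(\varSigma)$ is at most a constant multiple of $|\chi(\varSigma)|$; in fact only the upper bound on the area matters, since when the area falls below $2\pi|\chi(\varSigma)|$ the correction has the favorable sign. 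This normalization is precisely the delicate point behind the ``revised version'' of \cite{Schlenker}, so you are right to isolate it; with Bridgeman's bound and the scaling law in hand, the rest is bookkeeping and the proposal is sound.
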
 


\subsection{Variational Formula}  

In \cite{Schlenker} the metric at infinity of a quasi-Fuchsian manifold  $M$ 
which attains  the renormalized volume  is denoted  by  $\mathrm{I}^*$.  
In particular, 
the curvature of  $\mathrm{I}^*$  is constant  $-1$.  
The notation is consistent with the standard one for the first fundamental form.  
There is also an analogous notion  $\mathrm{II}^*$  of 
the second fundamental form on the surface at infinity with the same parametrization 
with appropriate scaling factor.  
More precisely, 
there is a unique bundle morphism  $B^*$  of the tangent space of boundary, 
corresponding to the shape operator,  
which is self-adjoint for  $\mathrm{I}^*$  and such that 
\begin{equation*} 
	\mathrm{II}^* = \mathrm{I}^*(B^*\cdot, \cdot) 
\end{equation*}  

The variational formula of the renormalized volume involves 
$\mathrm{II}^*, \, \mathrm{\dot{I}}^*$  and a Riemannian metric 
of constant curvature  $-1$  in the conformal class of the boundary.  
They all are symmetric $2$-tensors.  
In general, 
if we choose a local complex coordinate  $z = x + iy$,  
then 
we can express a symmetric $2$-tensor using the associated 
real coordinate  $(x, y)$  by  
$a (dx \otimes dx) + b (dx \otimes dy + dy \otimes dx) + c (dy \otimes dy)$  
and hence by a symmetric matrix 
\begin{equation*} 
	\begin{pmatrix} 
		a & b \\ 
		b & c 
	\end{pmatrix}.  
\end{equation*}  
Cororally 6.2 in \cite{KS}  states the variational formula of the W-volume 
as follows. 

\begin{lemma}[Proposition 3.10 \cite{Schlenker} and its revised version]\label{Lem:VariationalFormula}  
Under a first-order deformation of the hyperbolic structure on  $N$, 
\begin{equation*} 
	d \, {\rm Rvol} = - \frac{1}{4} \langle \mathrm{II}^*_0, \, \mathrm{\dot{I}}^* \rangle
\end{equation*} 
holds.  
Here  $\langle \; , \; \rangle$  is the extension to symmetric 2-tensors of 
the riemannian metric  $\rho^2 |dz|^2$  of constant curvature  $-1$  
defined by 
\begin{equation*} 
	\langle A, B \rangle = \int_R {\rm tr} \, (G^{-1}AG^{-1}B) \rho^2 dxdy,  
\end{equation*} 
where  $G$  is the metric tensor, 
and  $\mathrm{II}^*_0$  is a trace free part of  $\mathrm{II}^*$. 
\end{lemma}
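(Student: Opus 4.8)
The plan is to reduce the statement to the first variation of the $W$-volume and then to evaluate that variation by a Schläfli-type (Gauss--Bonnet) computation, finally transporting the boundary data out to infinity. I would begin from the definition $W(M,g) = \mathrm{vol}\,N_r - \tfrac14\int_{S_r}H_r\,da_r + \pi r\chi(\partial M)$ and differentiate term by term along a first-order deformation of the hyperbolic structure on $N$. The volume term is governed by the classical first variation of the volume of a region in a constant-curvature manifold, which yields a boundary integral pairing the second fundamental form $\mathrm{II}$ of $S_r$ against the variation $\dot{\mathrm{I}}$ of the induced metric. Differentiating the mean-curvature term produces $\dot H$ together with the variation of the area form; using the Gauss equation $K_{S_r} = -1 + \det B_r$ and the Gauss--Bonnet identity $\int_{S_r}K_{S_r}\,da_r = 2\pi\chi(\partial M)$, the exact $\pi r\chi(\partial M)$ counterterm is seen to absorb precisely the $r$-dependent and total-derivative pieces, so that $dW$ collapses to a single boundary pairing of $\dot{\mathrm{I}}$ with $\mathrm{II}$.

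Next I would specialize to the gauge used by $\mathrm{Rvol}$. Since the excerpt records that $\mathrm{Rvol}(M)=\sup_g W(M,g)$ over fixed-area metrics in the conformal class at infinity is uniquely attained at the metric $\mathrm{I}^*$ of constant curvature $-1$, the relevant variation is that of $\mathrm{I}^*$, namely $\dot{\mathrm{I}}^*$. Decomposing $\mathrm{II}^* = \mathrm{II}^*_0 + \tfrac12(\mathrm{tr}_{\mathrm{I}^*}\mathrm{II}^*)\,\mathrm{I}^*$, the trace-free part contributes $\langle \mathrm{II}^*_0,\dot{\mathrm{I}}^*\rangle$, while the trace part contributes a multiple of $\int (\mathrm{tr}_{\mathrm{I}^*}\mathrm{II}^*)(\mathrm{tr}_{\mathrm{I}^*}\dot{\mathrm{I}}^*)\,da$. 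Because $\mathrm{tr}_{\mathrm{I}^*}\dot{\mathrm{I}}^*$ is (twice) the infinitesimal change of the area form, and because the area of the surface at infinity is held fixed at $2\pi|\chi(\varSigma)|$ while $\mathrm{tr}\,B^*$ is constant at the curvature-$(-1)$ representative, this pure-trace contribution integrates to zero. What survives is exactly $-\tfrac14\langle \mathrm{II}^*_0,\dot{\mathrm{I}}^*\rangle$, with the pairing taken with respect to the hyperbolic metric $\rho^2|dz|^2$ as in the statement.

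The step that requires genuine care, and which I expect to be the main obstacle, is passing from the boundary data at finite distance to the renormalized starred data at infinity. Using Epstein's equidistant foliation and the asymptotic expansion of $g_r$, $H_r$ and $\mathrm{II}$ in powers of $e^{-2r}$, one must verify that after the rescaling $g=\lim 2e^{-2r}g_r$ the induced metric and trace-free second fundamental form converge to $\mathrm{I}^*$ and $\mathrm{II}^*_0$, and that the shape operator $B_r$ converges to $B^*$; the delicate point is that every divergent term in the $r\to\infty$ expansion must cancel against the counterterms already built into the definition of $W$, leaving a finite remainder that is intrinsic to the metric at infinity. This is the technical heart of Krasnov--Schlenker's analysis, whereas the finite-distance computation of $dW$, though lengthy, is routine once the first variation of volume and the Gauss equation are in hand.
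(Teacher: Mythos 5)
The paper offers no proof of this lemma --- it is imported directly from Corollary 6.2 of \cite{KS} and Proposition 3.10 of \cite{Schlenker} --- so there is nothing internal to the paper to compare your argument against. Your sketch faithfully reproduces the strategy of those references (term-by-term variation of $W$, the Schl\"afli-type first variation of volume, the Gauss equation plus Gauss--Bonnet to absorb the counterterms, vanishing of the pure-trace contribution because the hyperbolic representative has fixed area $2\pi|\chi(\varSigma)|$ and constant $H^*$, and transport of the boundary data to infinity via Epstein's foliation), and you correctly identify the cancellation of the divergent terms in the $r\to\infty$ expansion as the technical heart, which your outline defers to Krasnov--Schlenker rather than carries out.
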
 

Krasnov and Schlenker found a remarkable relation
between $\mathrm{II}^*_0$,
the trace free part of  $\mathrm{II}^*$,
and the holomorophic  quadratic differential  $q$ 
corresponding to the projective structure of the boundary.  
To see this more precisely, 
recall that  $q$  has a local expression  $q = q(z) dz^2$  and 
$q(z) = f(z) + ih(z)$.  
Then  ${\rm Re} \, q = f dx \otimes dx - f dy \otimes dy - h (dx \otimes dy + dy \otimes dx)$  
as a symmetric 2-tensor can be expressed by a trace free symmetric matrix 
\begin{equation*} 
	\begin{pmatrix} 
		f & -h \\
		-h & -f 
	\end{pmatrix} 
\end{equation*}  
The identity below follows directly from  explicit formulae for 
the holomorphic quadratic differential  $q$  in question.  
Another more geometric proof can also be found  in the appendix of  \cite{KS}.  

\begin{lemma}[Lemma 8.3 in \cite{KS}]\label{Lem:II=QD} 
\begin{equation*} 
	\mathrm{II}^*_0 = - \mathrm{Re} \, q.  
\end{equation*}
\end{lemma}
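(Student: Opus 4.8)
The plan is to prove the identity $\mathrm{II}^*_0 = -\mathrm{Re}\, q$ by a direct local computation, comparing the two trace-free symmetric $2$-tensors coefficient by coefficient in a well-chosen coordinate. The crucial observation is that both sides are objects attached to the surface at infinity equipped with the renormalized data $(\mathrm{I}^*, \mathrm{II}^*)$, and the relationship between the projective structure and the asymptotic geometry of the equidistant foliation is governed by the Schwarzian derivative of the developing map. Since the projective structure on $\Omega_\Gamma/\Gamma$ is encoded precisely by $q = S(f)$, where $f$ is the developing map, I expect the whole identity to reduce to an explicit formula expressing the second fundamental form at infinity in terms of the Schwarzian. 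Accordingly, the first step is to write down Epstein's construction of the equidistant surfaces $S_r$ explicitly in terms of the conformal metric $\mathrm{I}^* = \rho^2|dz|^2$ of curvature $-1$ and the projective datum, and to extract from it the limiting shape operator $B^*$.

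\textbf{Key steps.} First I would fix a local complex coordinate $z = x+iy$ in which $\mathrm{I}^*$ is the hyperbolic metric $\rho^2|dz|^2$, so that the trace-free part of any symmetric $2$-tensor can be read off from its matrix directly. Second, using the explicit Epstein map, I would expand the induced metrics $g_r$ and the second fundamental forms of the surfaces $S_r$ to the order needed to identify $\mathrm{II}^*$; the projective structure enters here through the osculating-by-$\mathrm{PSL}(2,\mathbb{C})$ property of the developing map, which is exactly where the Schwarzian $q(z)\,dz^2$ appears. Third, I would split $\mathrm{II}^*$ into its trace part (proportional to $\mathrm{I}^*$, hence contributing nothing to $\mathrm{II}^*_0$) and its trace-free part, and then read off the trace-free part as a symmetric matrix. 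Writing $q(z) = f(z) + ih(z)$, the computation should yield the matrix
\begin{equation*}
	\begin{pmatrix}
		-f & h \\
		h & f
	\end{pmatrix},
\end{equation*}
which is precisely $-\mathrm{Re}\, q$ in the matrix form displayed just before the statement, establishing $\mathrm{II}^*_0 = -\mathrm{Re}\, q$.

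\textbf{Main obstacle.} The hard part will be correctly tracking the asymptotic expansion in Epstein's construction: one must take the equidistant foliation, rescale by $2e^{-2r}$ to pass to the metric at infinity, and isolate the finite second-order term that carries the projective information, all while keeping the normalizations of $\mathrm{II}^*$ and the shape operator $B^*$ consistent with the sign and scaling conventions fixed in the definition of the $W$-volume. A subtle point is ensuring that the trace-free part is extracted with respect to the correct metric $\mathrm{I}^*$, since changing the background conformal factor alters which part is "trace-free." Because this bookkeeping is delicate, I would prefer to anchor the computation against the explicit holomorphic-quadratic-differential formulae for the projective boundary structure, as the statement suggests, and cross-check the result against the independent geometric derivation given in the appendix of \cite{KS}; agreement of the two methods confirms both the identity and the sign.
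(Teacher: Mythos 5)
The paper does not actually prove this lemma: it is quoted as Lemma 8.3 of Krasnov--Schlenker \cite{KS}, with the remark that it ``follows directly from explicit formulae for the holomorphic quadratic differential $q$'' and that a more geometric proof appears in the appendix of \cite{KS}. Your plan is precisely that second, geometric route --- write out Epstein's equidistant surfaces $S_r$, rescale by $2e^{-2r}$, and identify the trace-free part of the limiting second fundamental form with the Schwarzian of the developing map --- so in spirit you are reconstructing the argument the paper points to rather than taking a different road.

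As a proof, however, the proposal has a genuine gap: the one step that carries all the content of the lemma --- the asymptotic expansion of $g_r$ and of the shape operator along the Epstein foliation, from which $\mathrm{II}^*$ and hence $\mathrm{II}^*_0$ are actually computed --- is never performed. You write that the computation ``should yield'' the matrix $\begin{pmatrix} -f & h \\ h & f \end{pmatrix}$, but that is the conclusion, not an argument; and the places where such an expansion can go wrong (the overall sign, the factor $2$ in $g=\lim 2e^{-2r}g_r$, the normalization of $B^*$ through $\mathrm{II}^*=\mathrm{I}^*(B^*\cdot,\cdot)$, and the fact that ``trace-free'' must be taken with respect to $\mathrm{I}^*$) are exactly the points you list under ``main obstacle'' and then defer. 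Appealing at the end to the appendix of \cite{KS} to ``confirm the identity and the sign'' concedes that the verification has not been carried out: without writing down the Epstein map explicitly and extracting its second-order term, the proposal establishes only that $\mathrm{II}^*_0$ and $-\mathrm{Re}\,q$ are both trace-free symmetric $2$-tensors determined by the projective structure at infinity, not that they coincide.
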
 

Notice that  $q = (f + ih)dz^2$  can be recovered from  $\mathrm{Re} \, q$.  
Passing through this identification and 
using an identification of  $Q(R)$  with  $T_R \mathcal{T}$  with respect to 
the metric  $\rho^2|dz|^2$, 
the variation of the metric 
\begin{equation*} 
	\dot{\mathrm{I}}^* = 
	\begin{pmatrix} 
		2\varphi & - 2\psi \\ 
		- 2\psi & - 2\varphi 
	\end{pmatrix} 
\end{equation*} 
can be transformed to the Beltrami differential 
\begin{equation*} 
	\mu = \frac{\varphi -i \psi}{\rho^2} \frac{d \bar{z}}{dz}.   
\end{equation*} 
This leads us to the reinterpration of the variational formula in Lemma \ref{Lem:VariationalFormula}  
in terms of the duality pairing  $( \; , \; )$  in (\ref{Eq:DualityPairing}).  

\begin{lemma}\label{Lem:Variation}  
Under a first-order deformation of the hyperbolic structure on  $N$, 
\begin{equation*} 
	d \, {\rm Rvol} = -  \mathrm{Re} \, (q, \, \mu) = - \int_R \mathrm{Re} \, \mu q 
\end{equation*} 
holds.  
\end{lemma}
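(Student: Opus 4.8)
The plan is to read the claimed identity as a coordinate translation of Lemma~\ref{Lem:VariationalFormula}, using Lemma~\ref{Lem:II=QD} to replace $\mathrm{II}^*_0$ by $-\mathrm{Re}\,q$ and then the two explicit dictionaries set up above (the matrix forms of $\mathrm{Re}\,q$ and of $\dot{\mathrm{I}}^*$, and the passage from $\dot{\mathrm{I}}^*$ to the Beltrami differential $\mu$). Substituting $\mathrm{II}^*_0=-\mathrm{Re}\,q$ into Lemma~\ref{Lem:VariationalFormula} gives $d\,{\rm Rvol}=\tfrac14\langle \mathrm{Re}\,q,\dot{\mathrm{I}}^*\rangle$, so the entire content is to evaluate this pairing and recognize it as $-\mathrm{Re}(q,\mu)$ under the duality pairing (\ref{Eq:DualityPairing}).

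First I would observe that only the trace-free part of $\dot{\mathrm{I}}^*$ can contribute. Indeed $\mathrm{Re}\,q$ is trace free, and if a metric variation is written as its trace-free part plus a pure-trace term $\lambda G$, then $\langle \mathrm{Re}\,q,\lambda G\rangle=\lambda\int_R \mathrm{tr}\,(G^{-1}\mathrm{Re}\,q)\,\rho^2\,dxdy=\lambda\int_R\mathrm{tr}\,(\mathrm{Re}\,q)\,dxdy=0$. This justifies working with the trace-free representative $\dot{\mathrm{I}}^*=\begin{pmatrix}2\varphi&-2\psi\\-2\psi&-2\varphi\end{pmatrix}$ recorded above, and explains why the metric variation is naturally quadratic-differential data.

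Next I would evaluate the pairing in the coordinate $z=x+iy$ in which the metric is $\rho^2|dz|^2$, so that $G=\rho^2 I$ and $G^{-1}=\rho^{-2}I$. For trace-free symmetric matrices $A,B$ this collapses $\langle A,B\rangle=\int_R\mathrm{tr}\,(G^{-1}AG^{-1}B)\,\rho^2\,dxdy$ to $\int_R\rho^{-2}\,\mathrm{tr}\,(AB)\,dxdy$. Taking $A=\mathrm{Re}\,q=\begin{pmatrix}f&-h\\-h&-f\end{pmatrix}$ and $B=\dot{\mathrm{I}}^*$, a direct multiplication gives $\mathrm{tr}\,(AB)=4(f\varphi+h\psi)$, so the factor $4$ cancels the $\tfrac14$ and the computation reduces to $\int_R\rho^{-2}(f\varphi+h\psi)\,dxdy$. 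Finally, with $\mu=\tfrac{\varphi-i\psi}{\rho^2}\tfrac{d\bar z}{dz}$ and $q=(f+ih)dz^2$ one reads off $\mathrm{Re}(\mu q)=\rho^{-2}(f\varphi+h\psi)$, which identifies the integral with $\mathrm{Re}(q,\mu)=\int_R\mathrm{Re}\,\mu q$.

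The step I expect to be the real obstacle is pinning down the overall sign and normalization. Everything above is bookkeeping once the conventions are fixed, but several sign-sensitive choices enter at once: the orientation and conjugation built into the pairing (\ref{Eq:DualityPairing}) (the right factor lives in $\overline{\mathcal T}$), the identification of $Q(R)$ with $T_R\mathcal T$ using $\rho^2|dz|^2$ that produces $\mu$ from $\dot{\mathrm{I}}^*$, and the sign in Lemma~\ref{Lem:II=QD}. The care needed is to verify that these combine to give exactly the minus sign in $d\,{\rm Rvol}=-\mathrm{Re}(q,\mu)$; indeed the most naive reading of the dictionaries produces the coefficient $\rho^{-2}(f\varphi+h\psi)$ with the opposite global sign, so the crux is to track the orientation/conjugation conventions consistently and fix this single sign, after which matching $\rho^{-2}(f\varphi+h\psi)$ to $\mathrm{Re}\,\mu q$ completes the proof.
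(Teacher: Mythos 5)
Your proof is essentially the paper's: the same evaluation of the pairing against $G=\rho^2 I$ giving $\mathrm{tr}\,(AB)=4(f\varphi+h\psi)$, followed by the same identification $\rho^{-2}(f\varphi+h\psi)=\mathrm{Re}\,\mu q$. The sign discrepancy you flag as the crux is genuinely present in the paper's own proof, which computes $\langle \mathrm{II}_0^*,\dot{\mathrm{I}}^*\rangle=+4\int_R\mathrm{Re}\,\mu q$, i.e.\ silently drops the minus sign of Lemma~\ref{Lem:II=QD}; since only the absolute value of $d\,\mathrm{Rvol}$ matters for the estimates downstream, this does not affect the main results, and your account is otherwise complete.
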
  
\begin{proof} 
Fix a local coordinate  $z = x + iy$  and let   $\rho^2 |dz|^2$ denote the hyperbolic metric then, 
\begin{align*} 
	\langle \mathrm{II}_0^*, \, \dot{\mathrm{I}}^* \rangle 
	& = \int_R \mathrm{tr} (G^{-1} \mathrm{II}_0^* G^{-1} \dot{\mathrm{I}}^*) \rho^2 \, dxdy \\
	& = 4 \int_R (f \varphi + h \psi) \rho^{-2} dxdy \\	
	& = 4 \int_R \mathrm{Re} \left(\frac{\varphi - i \psi}{\rho^2} (f + i h) \right) dxdy \\ 
	& = 4 \int_R \mathrm{Re} \, \mu q.  
\end{align*}  
\end{proof} 


\subsection{Rvol versus Teichm\"uller Distance}  \label{comparing norms}

We start with a quasi-Fuchsian manifold  $\mathbb{H}^3/\Gamma = QF(X, Y)$.  
Fix a conformal structure  $X$  on the left boundary component, 
and regard a conformal structure  $Y$  on the right as a variable.  
To each  $Y$, 
we assign an associated complex projective structure on  $X$  and 
therefore a holomorphic quadratic differential  $q_X$.  
This defines a map, 
\begin{equation*} 
	B_X : \mathcal{T} \to Q(X),   
\end{equation*} 
called a Bers embedding.  

Using the hyperbolic metric in the conformal class of  $X$, 
we can measure at each point of  $\varSigma$ 
the norm of  $q_X$.  
Let  $Q^{\infty}(X)$  be  $Q(X)$  
endowed with the  $L^{\infty}$  norm, 
namely, 
\begin{equation*} 
	|| q ||_{\infty} = \sup_{x \in X} \frac{|q(x)|}{\rho^2(x)},  
\end{equation*} 
where   $\rho |dz|$  defines the hyperbolic metric of 
constant curvature  $-1$.  

The following theorem with respect to the hyperbolic metric of 
constant curvature $-1$, 
due to  Nehari, can be found 
in a standard text book of the Teichm\"uller theory 
such as Theorem 1, p.134 in \cite{GL}.  

\begin{theorem}[Nehari \cite{Nehari}] 
The image of  $B_X$  in  $Q^{\infty}(X)$  is contained in the ball of radius  $3/2$.  
\end{theorem}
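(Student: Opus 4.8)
The plan is to recognize the quadratic differential $B_X(Y)=q_X$ as the Schwarzian derivative of a \emph{univalent} map and then invoke the classical bound of Nehari for Schwarzians of schlicht functions. Recall from the discussion of projective structures that, lifting to the universal cover $\mathbb{D}$ of $X$ via uniformization, the projective structure on $X$ induced by $QF(X,Y)$ is developed by a map $f:\mathbb{D}\to\widehat{\mathbb{C}}$ with $S(f)=q_X$. In the quasi-Fuchsian setting this developing map is nothing but a Riemann map onto the simply connected component $\Omega_{\Gamma}^{+}$ of the domain of discontinuity, hence it is univalent on $\mathbb{D}$. The hyperbolic metric of curvature $-1$ on $\mathbb{D}$ is $\rho^2|dz|^2$ with $\rho^2(z)=4/(1-|z|^2)^2$, so by the definition of the $L^\infty$ norm,
\begin{equation*}
\|q_X\|_\infty=\sup_{z\in\mathbb{D}}\frac{|S(f)(z)|}{\rho^2(z)}=\frac14\sup_{z\in\mathbb{D}}|S(f)(z)|\,(1-|z|^2)^2 .
\end{equation*}
Thus the asserted radius $3/2$ is equivalent to the pointwise estimate $|S(f)(z)|\,(1-|z|^2)^2\le 6$ for univalent $f$, and the constant $3/2$ is exactly Nehari's $6$ divided by the normalization $\rho^2=4/(1-|z|^2)^2$.

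To prove this pointwise estimate I would first reduce to the point $z=0$ using the conformal naturality of the Schwarzian. Precomposition with a disk automorphism $\sigma$ obeys $S(f\circ\sigma)=(S(f)\circ\sigma)\,(\sigma')^2$, while postcomposition with a M\"obius transformation leaves $S(f)$ unchanged; choosing $\sigma$ with $\sigma(0)=z_0$ and $|\sigma'(0)|=1-|z_0|^2$ gives $|S(f)(z_0)|\,(1-|z_0|^2)^2=|S(f\circ\sigma)(0)|$, and $f\circ\sigma$ is again univalent. Normalizing by a further M\"obius postcomposition to the form $f(z)=z+a_2z^2+a_3z^3+\cdots$, a direct computation yields $S(f)(0)=6(a_3-a_2^2)$. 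The remaining task is the coefficient bound $|a_3-a_2^2|\le 1$, which I would obtain from the area theorem applied to the inversion $F(z)=1/f(1/z)=z-a_2+(a_2^2-a_3)z^{-1}+\cdots$, univalent on $\{|z|>1\}$: for $F(z)=z+\sum_{n\ge0}b_nz^{-n}$ the area theorem gives $\sum_{n\ge1}n|b_n|^2\le1$, and reading off $b_1=a_2^2-a_3$ forces $|a_3-a_2^2|\le1$, whence $|S(f)(0)|\le6$.

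The main conceptual obstacle is the identification that makes the developing map univalent: one must be certain that on the side $X=\Omega_{\Gamma}^{+}/\Gamma$ the developed image is the embedded simply connected domain $\Omega_{\Gamma}^{+}$, rather than some branched or overlapping image, so that Nehari's hypothesis genuinely applies. This is precisely where the quasi-Fuchsian structure is used, as opposed to an arbitrary projective structure, whose developing map need not be injective. By contrast the analytic heart, namely the area theorem together with the computation $S(f)(0)=6(a_3-a_2^2)$, is classical and routine; the only remaining care is bookkeeping the normalization constant, since the factor $\rho^2=4/(1-|z|^2)^2$ for curvature $-1$ is exactly what converts Nehari's bound $6$ into the radius $6/4=3/2$ of the statement.
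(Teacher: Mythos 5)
Your argument is correct and complete: the paper itself gives no proof of this statement, merely citing Nehari and Theorem~1, p.~134 of \cite{GL}, and the proof in that reference is exactly the classical Kraus--Nehari argument you reconstruct (M\"obius reduction to $z=0$, the computation $S(f)(0)=6(a_3-a_2^2)$, and the area theorem giving $|a_3-a_2^2|\le 1$). You also correctly handle the two points where care is needed, namely that the developing map is univalent because $\Omega_{\Gamma}^{+}$ is the simply connected universal cover of $X$, and that the curvature $-1$ normalization $\rho^2=4/(1-|z|^2)^2$ converts the pointwise bound $6$ into the radius $6/4=3/2$ claimed in the statement.
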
  

Then, 
consider now the $L^1$-norm on  $Q(X)$   
and denote by  $Q^1(X)$  the vector space  $Q(X)$  
endowed with the  $L^1$  norm.  

\begin{corollary}\label{Cor:NehariL1} 
The image of  $B_X$  in  $Q^1(X)$  is contained in the ball of 
radius  $3 \pi |\chi(\varSigma)|$.  
\end{corollary}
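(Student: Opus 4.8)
The plan is to derive the $L^1$ bound directly from Nehari's $L^{\infty}$ bound by comparing the quadratic differential pointwise against the hyperbolic area form. Write $q = q(z)\,dz^2$ for a differential in the image of $B_X$, and let $\rho |dz|$ denote the hyperbolic metric of constant curvature $-1$ in the conformal class of $X$. The very definition of the $L^{\infty}$-norm supplies the pointwise inequality $|q(z)| \leq \|q\|_{\infty}\,\rho^2(z)$ at every point of $X$.

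First I would integrate this pointwise inequality against the Lebesgue measure $dx\,dy$. Since $\|q\|_1 = \int_X |q| = \int_X |q(z)|\,dx\,dy$ while the hyperbolic area element is $\rho^2\,dx\,dy$, inserting a factor $\rho^2/\rho^2$ and pulling out the supremum gives
\begin{equation*}
	\|q\|_1 = \int_X \frac{|q(z)|}{\rho^2(z)}\,\rho^2(z)\,dx\,dy \leq \|q\|_{\infty} \int_X \rho^2\,dx\,dy = \|q\|_{\infty}\,\mathrm{Area}\,X.
\end{equation*}
Next I would invoke the Gauss-Bonnet identity recorded in the introduction, namely $\mathrm{Area}\,X = 2\pi|\chi(\varSigma)|$ for the hyperbolic metric, together with Nehari's theorem, which bounds $\|q\|_{\infty} \leq 3/2$ for every $q$ in the image of $B_X$. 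Combining the three estimates yields
\begin{equation*}
	\|q\|_1 \leq \frac{3}{2}\cdot 2\pi|\chi(\varSigma)| = 3\pi|\chi(\varSigma)|,
\end{equation*}
which is exactly the asserted radius.

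I expect no substantial obstacle, since the argument is nothing more than the elementary comparison $\|\cdot\|_1 \leq \|\cdot\|_{\infty}\cdot(\text{total area})$, which is legitimate precisely because the surface carries a finite-area hyperbolic metric. The one point that warrants attention is the bookkeeping of the area form: one must check that $|q| = |q(z)|\,dx\,dy$ as an area density and that the hyperbolic area element is $\rho^2\,dx\,dy$, so that the factor $\rho^2$ converts the Lebesgue integral defining $\|q\|_1$ into a hyperbolic-area integral, over which the $L^{\infty}$-norm can be extracted and the total area substituted via Gauss-Bonnet.
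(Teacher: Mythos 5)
Your proposal is correct and follows exactly the paper's own argument: the chain $\|q\|_1 = \int_X (|q|/\rho^2)\,\rho^2 \leq \|q\|_{\infty}\int_X \rho^2 = 2\pi|\chi(\varSigma)|\,\|q\|_{\infty}$ combined with Nehari's bound $\|q\|_{\infty}\leq 3/2$. There is nothing to add.
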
 
\begin{proof} 
The inequality 
\begin{equation*} 
	||q||_1 = \int_X |q| = \int_X \frac{|q|}{\rho^2} \, \rho^2 
	\leq ||q||_{\infty} \int_X \rho^2 = 2\pi |\chi(\varSigma)| \, ||q||_{\infty} 
\end{equation*} 
immediately implies the conclusion.  
\end{proof}  

The proof of the following comparison result is the same 
as that of Theorem 1.2 in  \cite{Schlenker}  by Schlenker with a different norm.  

\begin{proposition}\label{Prop:Rvol vs d_T}  
Suppose  $\varSigma$  is compact.  
The inequality,  
\begin{equation} \label{Rvol vs T}
	{\rm Rvol} \, QF(X, Y) \leq 3 \pi |\chi(\varSigma)|\, d_{\rm T}(X, Y),  
\end{equation} 
holds for any quasi-Fuchsian manifold  $QF(X, Y)$,  
where  $d_{\rm T}$  is the Teichm\"uller distance on  $\mathcal{T}$.  
\end{proposition}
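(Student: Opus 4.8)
The plan is to integrate the first-order variation of the renormalized volume (Lemma \ref{Lem:Variation}) along a path of quasi-Fuchsian manifolds joining the Fuchsian locus to $QF(X,Y)$, and then to control the integrand by the Nehari-type bound of Corollary \ref{Cor:NehariL1}. I would hold the conformal structure $X$ on the left boundary component fixed and let the right conformal structure travel along a Teichm\"uller geodesic $\{Y_t\}_{0 \le t \le L}$ from $Y_0 = X$ to $Y_L = Y$, parametrized by arc length so that $L = d_{\rm T}(X,Y)$. The associated family $QF(X,Y_t)$ then starts at the Fuchsian manifold $QF(X,X)$, whose renormalized volume is $0$, and ends at $QF(X,Y)$.

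First I would record the base point $\mathrm{Rvol}\, QF(X,X) = 0$, the Krasnov--Schlenker normalization at the Fuchsian locus. Since $X$ is held fixed, the variation of the conformal structure on the left component vanishes, so in Lemma \ref{Lem:Variation} only the right component contributes. Writing $q_t \in Q(Y_t)$ for the holomorphic quadratic differential of the projective structure on the right component of $QF(X,Y_t)$ and $\mu_t$ for the Beltrami differential representing $\dot{Y}_t$, the fundamental theorem of calculus gives
\begin{equation*}
	\mathrm{Rvol}\, QF(X,Y) = -\int_0^L \mathrm{Re}\,(q_t, \mu_t)\, dt.
\end{equation*}

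The estimate then follows in three moves. By the duality inequality built into the definition of $\|\cdot\|_\infty$ one has $|\mathrm{Re}\,(q_t,\mu_t)| \le \|q_t\|_1\, \|\mu_t\|_\infty$. The differential $q_t$ is the image $B_{Y_t}(X)$ of $X$ under the Bers embedding based at $Y_t$, so Corollary \ref{Cor:NehariL1} supplies the uniform bound $\|q_t\|_1 \le 3\pi |\chi(\varSigma)|$. Finally, because the norm $\|\cdot\|_\infty$ on $L_\infty/K \cong T\mathcal{T}$ is exactly the infinitesimal form of the Teichm\"uller metric, arc-length parametrization gives $\|\mu_t\|_\infty = 1$ and $\int_0^L \|\mu_t\|_\infty\, dt = L = d_{\rm T}(X,Y)$. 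Combining,
\begin{equation*}
	\mathrm{Rvol}\, QF(X,Y) \le \int_0^L \|q_t\|_1\, \|\mu_t\|_\infty\, dt \le 3\pi|\chi(\varSigma)|\, d_{\rm T}(X,Y).
\end{equation*}

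The step I expect to demand the most care is justifying that the first-order formula of Lemma \ref{Lem:Variation} may be integrated along the chosen path: this requires the renormalized volume to be a $C^1$ (in fact real-analytic) function of the Bers parameters and the Teichm\"uller geodesic to depend on $t$ regularly enough that $t \mapsto \mathrm{Rvol}\,QF(X,Y_t)$ is absolutely continuous with the stated derivative -- exactly the machinery borrowed from Schlenker's proof of his Theorem 1.2, the only change being that the $L^2$ (Cauchy--Schwarz) pairing there is replaced here by the $L^1$--$L^\infty$ duality, which is precisely what lets Corollary \ref{Cor:NehariL1} enter. A secondary point is the identification of the dual norm $\|\cdot\|_\infty$ with the Teichm\"uller metric (Royden's theorem), which is what makes the constant come out as $3\pi|\chi(\varSigma)|$ rather than up to an uncontrolled multiplicative factor.
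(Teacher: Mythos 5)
Your proposal is correct and follows essentially the same route as the paper: integrate the variational formula of Lemma \ref{Lem:Variation} along the Teichm\"uller geodesic $\{QF(X,Y_t)\}$ with the left boundary frozen (so only the right component contributes), start from $\mathrm{Rvol}\,QF(X,X)=0$, and bound the integrand by the $L^1$--$L^\infty$ duality together with the Nehari bound of Corollary \ref{Cor:NehariL1} and $\|\dot Y_t\|_\infty = 1$. Your closing remarks about the regularity needed to integrate the first-order formula and about Royden's identification of the dual $L^\infty$ norm with the Teichm\"uller metric are exactly the points the paper leaves implicit by deferring to Schlenker's Theorem 1.2.
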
 
\begin{proof} 
Let  $Y : [0, d] \to \mathcal{T}$  be  the unit speed 
Teichm\"uller geodesic joining $X$  and  $Y$, 
so that, 
in particular,  
$Y(0) = X, \, Y(d) = Y$  and  $d = d_{\rm T}(X, Y)$.  
Then consider a one-parameter family of 
quasi-Fuchsian manifolds  $\{ QF(X, Y(t)) \}_{0 \leq t \leq d}$.   
By  Lemma \ref{Lem:Variation}  the variation
under the first-order deformation at time  $t$  is given by
\begin{equation*} 
	d \, {\rm Rvol} = 
	\mathrm{Re} 
	\left( (q_X(t), \dot{X})) + (q_{Y(t)}(t), \dot{Y}(t)) \right),  
\end{equation*} 
where  $\dot{X}, \, \dot{Y}(t)$  are tangent vectors of 
the deformation of complex structures 
on the two  ideal boundary components.  
Since  $\dot{X} = 0$,   
integrating the variation of ${\rm Rvol}$ along 
the path  $Y(t) \; (t \in [0,d])$, 
yields an expression for the renormalised volume
\begin{equation*} 
	{\rm Rvol} \, QF(X, Y) 
	= \mathrm{Re} \int_{t=0}^d (q_{Y(t)}(t), \dot{Y}(t)) dt.  
\end{equation*}   
On the other hand, 
\begin{align*} 
	|(q_{Y(t)}(t)), \dot{Y}(t))| 
	& = \left| \int_R q_{Y(t)}(t) \, \dot{Y}(t) \right| \\  
	& = \int_R |q_{Y(t)}(t) \, \dot{Y}(t)| \\  	
	& \leq ||q_{Y(t)}(t) ||_1 \, ||\dot{Y}(t)||_{\infty} 
\end{align*}  
where  $|| \cdot ||_{\infty}$  is the supremum norm on  $L_{\infty}(Y(t))$  
which is the dual to the  $L^1$-norm on  $Q(Y(t))$  and hence 
an infinitesimal form of the Teichm\"uller metric.  
By 
Corollary \ref{Cor:NehariL1},  for all  $t \in [0,d]$ one has , 
\begin{equation*} 
	||q_{Y(t)}(t) ||_1 \leq 3\pi |\chi(\varSigma)|,
\end{equation*} 
and,  by definition,  $|| \dot{Y}(t) ||_{\infty} = 1$   
so that the inequality now follows.
\end{proof} 

Replacing (\ref{Rvol vs T}) in  the inequality in Theorem \ref{Thm:Vol vs Rvol} one has,

\begin{corollary}\label{Vol vs T}
With the notation above,
\begin{eqnarray}
\mathrm{vol} \, C(X, Y) \leq 3 \pi |\chi(\varSigma)| \, d_{\rm T}(X, Y)  + D
\end{eqnarray}
\end{corollary}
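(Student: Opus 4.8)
The plan is to obtain the inequality by directly composing the two results proved just above, since the corollary is a purely formal consequence of them. First I would apply Theorem \ref{Thm:Vol vs Rvol}, which bounds the volume of the convex core by the renormalized volume up to the additive topological constant, giving
\begin{equation*}
\mathrm{vol}\, C(X,Y) \leq \mathrm{Rvol}\, QF(X,Y) + D.
\end{equation*}
Then I would insert the bound on the renormalized volume supplied by Proposition \ref{Prop:Rvol vs d_T}, namely the inequality (\ref{Rvol vs T}) that controls $\mathrm{Rvol}\, QF(X,Y)$ by $3\pi|\chi(\varSigma)|\, d_{\rm T}(X,Y)$. Chaining the two estimates yields the claimed bound at once.

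Both ingredient inequalities hold for every $X, Y \in \mathcal{T}$ under the standing assumption that $\varSigma$ is compact, so no further hypotheses are required; moreover the substitution touches only the $\mathrm{Rvol}$ term and therefore leaves the constant $D$ unchanged. In this sense there is no genuine obstacle at the level of the corollary itself. The substantive difficulty sits entirely in the two upstream results: Schlenker's comparison of convex-core volume with renormalized volume, and the integration of the variational formula of Lemma \ref{Lem:Variation} against the Nehari-type $L^1$ bound of Corollary \ref{Cor:NehariL1} that powers Proposition \ref{Prop:Rvol vs d_T}. The only point I would verify is that the Teichm\"uller distance enters with exactly the coefficient $3\pi|\chi(\varSigma)|$ rather than a rescaled version, which is ensured by the normalization $||\dot{Y}(t)||_{\infty} = 1$ of the unit-speed geodesic used in that proposition.
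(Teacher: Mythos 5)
Your proposal is correct and is exactly the paper's argument: the corollary is obtained by substituting the bound of Proposition \ref{Prop:Rvol vs d_T} into the inequality of Theorem \ref{Thm:Vol vs Rvol}, with the additive constant $D$ untouched. Nothing further is needed.
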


%
%

\section{Proof} 

We first deal with the case that  $\varSigma$  is compact.  
Let  $\varphi$  be a pseudo-Anosov automorphism on  $\varSigma$  
and choose a marked Riemann surface  $X \in \mathcal{T}$  on 
the Teichm\"uller geodesic invariant by  $\varphi$.   
Remember that  $\varphi$  acts naturally on  $\mathcal{T}$  
by pre-composing  $\varphi^{-1}$  to the marking of  $X \in \mathcal{T}$  
and consider a family of quasi-Fuchsian manifolds   
$\{ QF(\varphi^{-n} X, \varphi^n X) \, ; \, n \in \mathbb{Z} \}$.  
These manifolds are quite close to
the infinite cyclic covering space of  $N_{\varphi}$  
if  $n$  is sufficiently large.   
Applying Corollary \ref{Vol vs T}
and dividing by  $2n$,  
we obtain the following estimate,  
\begin{equation}\label{Eq:Vol vs d_T}  
	\frac{1}{2n} {\rm vol} \, C(\varphi^{-n} X, \varphi^n X) 
	\leq \frac{1}{2n} \left( 3 \pi |\chi(\varSigma)| \, d_{\rm T}(\varphi^{-n} X, \varphi^n X) + D \right).  
\end{equation} 
We consider the limit as  $n \to \infty$
beginning with the right hand  side.

By a result of Bers in \cite{BersII}  (cf, \cite{Kojima}), 
we know that 
\begin{equation}\label{Eq:limit}  
	\lim_{n \to \infty} \frac{1}{2n} \, d_{\rm T} (\varphi^{-n}X, \varphi^nX) 
	= || \varphi ||_{\rm T} 
	= {\rm ent} \, \varphi
\end{equation} 
where  $|| \varphi ||_{\rm T}$  is the Teichm\"uller translation distance of  $\varphi$  
defined by 
\begin{equation*}
	|| \varphi ||_{\rm T} = \inf_{R \in \mathcal{T}} d_{\rm T}(R, \varphi R).  
\end{equation*} 
Since the constant  $D$  in  (\ref{Eq:Vol vs d_T})  does not depend on  $n$, 
the limit of the right hand  side is just a multiple of the   entropy. 

We now  consider the left hand side of (\ref{Eq:Vol vs d_T}).
Brock \cite{Brock} states  that  geometric inflexibility
should yield a proof that the limit as $n \rightarrow \infty$ 
exists and is equal to  ${\rm vol} \, N_{\varphi}$.  
However,  
at the time of writing,
it appears that there is no written proof of this fact.  
So,  
for completeness,  
we give its proof in the appendix, 
and we proceed the argument assuming this fact.

\begin{proof}[{\bf Proof of Theorem \ref{Thm:Main} for a compact surface $\varSigma$}]
The inequality in Theorem \ref{Thm:Main}  follows immediately from 
(\ref{Eq:Vol vs d_T}),  
(\ref{Eq:limit}) and 
the asymptotic behavior of the left hand side of 
(\ref{Eq:Vol vs d_T})  in  (\ref{Eq:vol N vs vol C}) 
for which we give a proof in the next section.  
\end{proof}

We now deal with the case that  $\varSigma$  has  $m$  punctures, 
where  $m \geq 1$.  
The argument reducing to the compact case below was 
suggested by Ian Agol.  
We begin with 

\begin{lemma} 
Suppose  $m \geq 2$,   
then there is an arbitrary high degree finite cover of  $\varSigma$  
so that the number of punctures is exactly $m$.
\end{lemma}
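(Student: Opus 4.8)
The plan is to produce the covers directly from transitive permutation representations $\rho \colon \pi_1(\varSigma) \to S_d$ on $d$ symbols, $d$ being the degree, equivalently from finite-index subgroups of $\pi_1(\varSigma)$. Recall that $\pi_1(\varSigma_{g,m})$ is generated by $a_1, b_1, \dots, a_g, b_g$ together with loops $c_1, \dots, c_m$ around the punctures, subject to the single relation $\prod_i [a_i,b_i]\,\prod_j c_j = 1$. For the connected cover determined by such a $\rho$, the punctures lying over the $j$-th puncture of $\varSigma$ are in bijection with the cycles of the permutation $\rho(c_j)$ acting on the $d$ symbols. Hence the cover has exactly $m$ punctures precisely when each $\rho(c_j)$ acts as a single $d$-cycle, so that every puncture has a unique preimage. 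The whole problem is therefore to realize each peripheral element as a $d$-cycle subject to the surface relation.

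First I would look among cyclic covers, i.e. surjections $\rho \colon \pi_1(\varSigma) \to \mathbb{Z}/d$. Here $\rho(c_j)$ is an element $k_j \in \mathbb{Z}/d$ acting by translation, whose orbits number $\gcd(k_j, d)$, so the $j$-th peripheral loop has a single preimage if and only if $k_j$ is a unit modulo $d$. Abelianizing the relation kills the commutators and leaves only the constraint $\sum_j k_j \equiv 0 \pmod d$, while the values of $\rho$ on the $a_i, b_i$ are unconstrained and can be used to force surjectivity, hence connectedness of the cover.

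The one subtlety — which I expect to be the main point — is reconciling the requirement that each $k_j$ be a unit with the relation $\sum_j k_j \equiv 0 \pmod d$. For instance, if $d$ is even and $m$ is odd no such choice exists, since each unit modulo an even $d$ is odd and an odd number of odd terms has odd sum; the same parity obstruction persists for non-abelian $\rho$, because for even $d$ each peripheral $d$-cycle is an odd permutation whereas a product of commutators is always even. I would sidestep this entirely by taking $d$ to be a prime with $d > m$: then every nonzero residue modulo $d$ is a unit, so it suffices to choose nonzero $k_j$ summing to $0$. Setting $k_1 = \dots = k_{m-1} = 1$ and $k_m = -(m-1)$ works, since $d \nmid (m-1)$ forces $k_m \not\equiv 0$, while $k_1 = 1$ already generates $\mathbb{Z}/d$ and so makes $\rho$ surjective and the cover connected. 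The hypothesis $m \geq 2$ is exactly what provides a second peripheral loop to absorb the relation. As there are infinitely many primes $d > m$, this produces connected covers of arbitrarily large degree, each with exactly $m$ punctures, which completes the proof.
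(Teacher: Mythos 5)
Your proof is correct and follows essentially the same route as the paper: both arguments take cyclic covers of prime degree $p$ with $p$ coprime to $m-1$ (your condition $p=d>m$ guarantees this) and send each peripheral loop to a nonzero, hence generating, element of $\mathbb{Z}/p$, so that each puncture has a single preimage. Your write-up merely makes explicit the details the paper leaves implicit (the cycle-counting criterion for punctures upstairs and the concrete choice of the $k_j$ satisfying the abelianized relation).
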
 

\begin{proof}  
We construct the required  family of  coverings as follows.
Choose an increasing sequence of distinct primes  $\{p_i\}$  such that 
each  $p_i$  is coprime to  $m-1$.  
Then, 
there is a homomorphism  $\theta_i : \pi_1(\varSigma) \to \mathbb{Z}/p_i \mathbb{Z}$  
so that every element represented by a simple  loop around a puncture 
is mapped to a nontrivial element in  $\mathbb{Z}/p_i \mathbb{Z}$.  
The coverings associated to  $\{ {\rm Ker} \, \theta_i \}$  has the 
property in the statement.      
\end{proof} 

\begin{proof}[{\bf Proof of Theorem \ref{Thm:Main} for noncompact  $\varSigma$}]
Suppose  $m \geq 2$.
By the preceding lemma there is  a family of coverings  $\widetilde{\varSigma}_i \to \varSigma$  of 
increasing degrees  $d_i$ 
such that the number of punctures of each cover is just $m$.  
For each  $i$,  
there is  $k_i$  so that the action of  $\varphi^{k_i}$  leaves  
$\pi_1(\widetilde{\varSigma}_i)$  invariant in  $\pi_1(\varSigma)$
and so  $\varphi^{k_i}$  lifts to an automorphism  $\widetilde{\varphi}_i$  of 
$\widetilde{\varSigma}_i$.  
Filling  $m$  punctures of  $\widetilde{\varSigma}_i$,  
we obtain a compact surface  $\overline{\varSigma}_i$  and 
an induced automorphism of   $\overline{\varphi}_i$  of $\overline{\varSigma}_i$
which is pseudo-Anosov. 
The construction guarantees that we have the following relation between the entropies of 
the automorphisms,
\begin{equation*} 
	{\rm ent} \, \overline{\varphi}_i = {\rm ent} \, \varphi^{k_i} =  k_i \, {\rm ent} \, \varphi. 
\end{equation*} 
Applying the estimate for compact surfaces obtained above one obtains, 
\begin{equation*} 
	2\pi (d_i | \chi(\varSigma) | + m) \, {\rm ent} \, \overline{\varphi}_i 
	\geq \frac{2}{3} {\rm vol} \, N_{\overline{\varphi}_i}.  
\end{equation*} 
Now, dividing both sides by $k_id_i$  and letting  $i \to \infty$,
the limit of the left hand side is   $2\pi |\chi(\varSigma)| {\rm ent} \, \varphi$
whilst,  by Thurston's Orbifold Dehn Filling Theorem,
the limit of the right hand side is  $\frac{4}{3} {\rm vol} \, N_{\varphi}$.   
Thus we  obtain  the inequality  (\ref{Eq:MainII})  for non compact 
surfaces provided  $m \geq 2$. 

Finally, if  $m = 1$  then any finite abelian cover   
$\tilde{\varSigma} \rightarrow \varSigma$  of degree $p\geq 2$ 
has more than one puncture,
and there is  $k>0$  so that  $\varphi^k$  lifts to  $\tilde{\varSigma}$.
One verifies that the corresponding mapping torus is a  degree $kp$ 
cover of  $N_\varphi$ and the entropy of  $\varphi^k$ is $k\, {\mathrm ent}\, \varphi$.
Hence this case reduces  to the previous case  ($m \geq 2$).  
\end{proof}

It remains to prove Corollary  \ref{Cor:WP}  so 
consider the $L^2$ inner product on 
$Q(R) \cong T_R^* \mathcal{T}$  defined by 
\begin{equation*} 
	\langle q, q' \rangle = \int_R \frac{\bar{q}q'}{\rho^2}, 
\end{equation*}  
and recall that the Weil-Petersson metric  $d_{{\rm WP}}$  on the Teichm\"uller space  $\mathcal{T}$  is 
a Riemannian part of the dual Hermitian metric to the above co-metric 
on the cotangent space.  
Then, 
Weil-Peterson translation distance is defined by 
\begin{equation*}
	|| \varphi ||_{\rm WP} = \inf_{R \in \mathcal{T}} d_{\rm WP}(R, \varphi R).  
\end{equation*}

\begin{proof}[{\bf Proof of Corollary \ref{Cor:WP}}] 
If one starts the proof of Theorem \ref{Thm:Main}  for compact surfaces 
with Theorem 1.2 in \cite{Schlenker}  
which asserts  
\begin{equation*} 
	{\rm Rvol} \, QF(X, Y) \leq \frac{3 \sqrt{2\pi |\chi(\varSigma)|}}{2} \, d_{\rm WP}(X, Y),  
\end{equation*} 
instead of Proposition \ref{Prop:Rvol vs d_T}, 
and applies    
\begin{equation*} 
	|| q ||_2^2 
	= \int_R \frac{|q|^2}{\rho^2} 
	\leq ||q||_1 ||q||_{\infty} 
	\leq 2\pi |\chi(\varSigma)| \,  || q ||_{\infty}^2,    
\end{equation*} 
where  $|| \cdot ||_2$  is the $L^2$ norm induced by  $\langle  \cdot , \cdot \rangle$,   
we obtain the desired estimate.  
\end{proof} 

\begin{remark}  
If we prove {\em Corollary \ref{Cor:WP}}  first, 
then {\em Theorem \ref{Thm:Main}} for the compact case  follows immediately
by the  Cauchy-Schwarz inequality,
\begin{equation*} 
	|| q ||_1^2 
	= \left( \int_R |q| \right)^2 
	= \left( \int_R \rho \cdot \frac{|q|}{\rho} \right)^2 
	\leq \int_R \rho^2 \cdot \int_R \frac{\bar{q}q}{\rho^2} 
	= 2\pi |\chi(\varSigma)| \, ||q||_2^2,  
\end{equation*} 

\end{remark}

%
%

\bigskip 

\section{Appendix} 

We now consider the left hand side of  (\ref{Eq:Vol vs d_T}).  
Brock  \cite{Brock}  states that 
geometric inflexibility should yield a proof that the limit as  $n \to \infty$  
\begin{equation*} 
	{\rm vol} \, C(\varphi^{-n}X, \varphi^nX) = 2n \, {\rm vol} \, N_{\varphi} + O(1).   
\end{equation*} 
We give a proof of this using Minsky's work on bilipschitz models to simplify certain points.  

It is useful to think of  the convex hull $C_n$ of  $Q_n = QF(\varphi^{-n}X, \varphi^{n}X)$ 
as being ``modelled" on  another 3-manifold $M_n$ as follows.
The boundary $\partial C_n$ consists of a pair of surfaces  
$\partial^\pm  C_n$ each homeomorphic to $\varSigma$.
By the work of Epstein-Marden-Markovic \cite{EMM}  and  
Bridgeman \cite{Bridgeman}, 
these  surfaces (equipped with their path metrics)
 are   5-bilipschitz  to $\varphi^{\pm n} X$ 
 respectively (equipped with their Poincar\'e metrics
 and the markings $\varphi^{\mp n}$.)
Thus the boundary components of the convex core 
are ``modelled" on the surfaces at infinity
in that they have ``roughly the same geometry".
Ideally one would like to extend this equivalence
allowing us to think of  the convex hull $C_n$
as being ``modelled" on  a 3-manifold, which we denote by  $M_n$,
that is none other than  the portion of the
universal curve above the axis of $\varphi$
between the points $\varphi^{-n}X$ and $\varphi^{n}X$.
In fact, by a theorem of  Minsky \cite{Minsky} and additional work of Rafi \cite{Rafi}
the convex core $C_n$ is uniformly bilipschitz to $M_n$  equipped with a metric which we now describe.
We parameterise the Teichm\"uller geodesic  $g$  between $\varphi^{-n}X$ and $\varphi^{n}X$ 
by arclength and identify its source with $[-n \, {\rm ent} \, \varphi,  n \, {\rm ent} \, \varphi]$.
For $t \in [-n \, {\rm ent} \, \varphi,  n \, {\rm ent} \, \varphi]$
the metric on $\varSigma \times \{t\}$ is the metric assigned by  $g(t)$.
The distance between  $\varSigma \times \{t\}$ and  $\varSigma \times \{s\}$
is $| s - t |$.
There are three important consequences of the existence of Minsky's model $M_n$:
\begin{enumerate}
\item 
	Recall that a hyperbolic 3-manifold has \textit{bounded geometry} if the injectivity 
	radius is bounded below by a positive constant.
	All the manifolds that we consider have \textit{uniformly bounded geometry}.
	that is, the  injectivity radius of the sequence $Q_n$ is  bounded from below 
	by a  constant $\epsilon_0>0$.
	Under this hypothesis,  a version of the  Morse Lemma says that  a closed  curve 
$\gamma$ of length $l(\gamma)$ is contained in an $R$-regular neighborhood
of the closed geodesic in its homotopy class where 
$R = l(\gamma)/2 + \cosh^{-1} (l(\gamma)/(2\epsilon_0) )$.
	
\item 
	Let $\gamma_*$ be  a (short, simple)  closed curve on $X$
	and we denote the  geodesic representative of  $\varphi^k (\gamma_*)$  in  $C_n$  
	by  $\varphi^k(\gamma)$.
	Then the lengths of the geodesics
 	$\varphi^{k}(\gamma)$  
	for  $-n \leq k \leq n$  are uniformly bounded.
\item 
	If $-n \leq i, j \leq n$,  
	then the distance between geodesics 
	$\varphi^{i}(\gamma)$  and $\varphi^{j}(\gamma)$  in $C_n$ 
	is \textit{roughly} $|i-j|$.  
	More precisely there are constants $E > 1, F >0$ such that 
	\begin{equation} \label{linear growth}
		\frac{1}{E} |i-j| {\rm ent}\, \varphi - F 
		\leq  d(\varphi^{i}(\gamma), \varphi^{j}(\gamma))  
		\leq E |i-j| {\rm ent} \, \varphi + F.
	\end{equation} 
	To see this we identify the curve $\varphi^k(\gamma_*)$ with the obvious
	curve in the fibre $\Sigma \times \{k\,{\rm ent}\, \varphi \}$ to obtain a family 
	of curves in the model manifold  $M_n$  all of which have the same length $L$ say.
	The distance between $\varphi^i(\gamma_*)$ and $\varphi^j(\gamma_*)$
	in the metric on the model is \textit{exactly} $|i-j|\,{\rm ent} \, \varphi$. 
	Push forward using the $E$-bilipschitz homeomorphism that Minsky constructs
	(hence the factors $E^{\pm 1}$)
 	to obtain  a pair of curves homotopic to the  geodesics 
 	$\varphi^i(\gamma)$ and $\varphi^j(\gamma)$ respectively.
 	Observe that the  lengths of these curves are bounded by $EL$
 	so that they are at bounded distance from the closed geodesics
 	by the Morse Lemma
 	(hence the terms $\pm F$).   
\end{enumerate}
\medskip 

An important notion, due to McMullen, is that of   \textit{depth} in the convex core
which,  for a set of points,  is defined to be the minimum distance to the boundary of the convex core.
The inequality  (\ref{linear growth}) can be used 
to prove an estimate for the depth of the geodesic  $\varphi^{k}(\gamma)$:

\begin{lemma}
With the notation above,
there exists $F_1>0$, which does not depend on $n$,
such that for $-n \leq k \leq n$
\begin{equation}\label{depth growth}
		\frac{1}{E} ( n-|k|)\,{\rm ent}\, \varphi - F_1
		\leq 
	d(\partial C_n, \varphi^{k}(\gamma))   
	\leq 
	E( n-|k|)\,{\rm ent}\, \varphi +  F_1
\end{equation}
\end{lemma}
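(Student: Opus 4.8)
The plan is to derive both halves of \eqref{depth growth} from the already established estimate \eqref{linear growth}, using the two extreme geodesics $\varphi^{\pm n}(\gamma)$ as proxies for the two boundary components $\partial^{\pm} C_n$. By the symmetry $k \mapsto -k$ it is enough to treat $0 \le k \le n$, so that $|n-k| = n-k = n-|k|$ and $\partial^+ C_n$ is the nearer component.

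First I would record two auxiliary facts, each uniform in $n$. (i) The geodesic $\varphi^{n}(\gamma)$ lies within a bounded distance $F_0$ of $\partial^+ C_n$: in Minsky's model $M_n$ the curve $\varphi^{n}(\gamma_*)$ sits in the top boundary fibre $\varSigma \times \{ n\,{\rm ent}\,\varphi \}$, the bilipschitz homeomorphism $M_n \to C_n$ carries this fibre to $\partial^+ C_n$, and the Morse Lemma (consequence (1) above) keeps the closed geodesic in its class within bounded distance of the boundary. (ii) The boundary component $\partial^+ C_n$ has uniformly bounded diameter: uniformly bounded geometry keeps the fibres of $M_n$ in the thick part of moduli space, so the top fibre is a hyperbolic surface of fixed topology with injectivity radius bounded below and hence bounded diameter, which is multiplied by at most $E$ under the bilipschitz map. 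Combining (i) and (ii), every point $p \in \partial^+ C_n$ satisfies $d(p, \varphi^{n}(\gamma)) \le F_2$ for a constant $F_2 = F_0 + {\rm diam}$ independent of $n$ and $k$; symmetrically, every $p \in \partial^- C_n$ satisfies $d(p, \varphi^{-n}(\gamma)) \le F_2$.

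The upper bound follows by applying the triangle inequality to a boundary point near $\varphi^{n}(\gamma)$ together with the right-hand inequality of \eqref{linear growth}:
\[
	d(\partial C_n, \varphi^{k}(\gamma)) \le F_0 + d(\varphi^{n}(\gamma), \varphi^{k}(\gamma)) \le E(n-|k|)\,{\rm ent}\,\varphi + (F_0 + F).
\]
For the lower bound I take an arbitrary point $p \in \partial C_n$. If $p \in \partial^+ C_n$, then $d(p, \varphi^{n}(\gamma)) \le F_2$ and the left-hand inequality of \eqref{linear growth} give
\[
	d(p, \varphi^{k}(\gamma)) \ge d(\varphi^{n}(\gamma), \varphi^{k}(\gamma)) - F_2 \ge \tfrac{1}{E}(n-|k|)\,{\rm ent}\,\varphi - (F + F_2);
\]
if instead $p \in \partial^- C_n$, the same computation with $\varphi^{-n}(\gamma)$ produces the lower bound $\tfrac{1}{E}(n+k)\,{\rm ent}\,\varphi - (F+F_2)$, which is no smaller since $n+k \ge n-|k|$ for $k \ge 0$. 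As this holds for every $p$, it holds for the minimizer, and taking $F_1 = F + F_2$ yields \eqref{depth growth}.

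The crux is the lower bound, and in particular arranging the additive error to be uniform in both $n$ and $k$. This is precisely what fact (ii) supplies: one needs the extreme curve to be close to the \emph{entire} boundary component, not merely to a single point of it, and this would fail without a uniform lower bound on the injectivity radius, since the surfaces $\partial^{\pm} C_n$ could otherwise develop long thin necks of unbounded diameter. Thus uniformly bounded geometry is the essential hypothesis, entering through the diameter bound rather than through \eqref{linear growth} itself.
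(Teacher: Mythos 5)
Your argument is correct and follows essentially the same route as the paper: both reduce \eqref{depth growth} to \eqref{linear growth} by showing that each extreme geodesic $\varphi^{\pm n}(\gamma)$ lies a uniformly bounded distance from the corresponding boundary component and that $\partial^{\pm}C_n$ has uniformly bounded diameter, then applying the triangle inequality and taking the minimum over the two boundary components. The only cosmetic difference is that the paper obtains your facts (i) and (ii) from the $5$-bilipschitz nearest-point retraction of Epstein--Marden--Markovic and Bridgeman (giving $\mathrm{diam}\,\partial^{\pm}C_n \le 5\,\mathrm{diam}\,X$ directly) rather than from the fibres of Minsky's model, and your unlabelled additive constants silently absorb the length $L$ of the bounded curves, as the paper's do explicitly.
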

Brock-Bromberg  \cite{BB} give a proof of an analogous  inequality
without the hypothesis of bounded geometry.
As  we will use (\ref{depth growth})  in an essential way in the proof of Theorem \ref{Thm:Brock}, 
we give a short proof .

\begin{proof}
The inequality follows from (\ref{linear growth}) 
provided the distances 
$d(\partial^+ C_n, \varphi^{- n}(\gamma))$ 
and $d(\partial^- C_n, \varphi^{ n}(\gamma))$ are uniformy bounded.

The geodesic $\varphi^{-n}(\gamma)$ in $Q_n$ represents the curve 
$\varphi^{-n}(\gamma_*)$ on the surface at infinity $\varphi^{n}(X)$.
The length of $\varphi^{-n}(\gamma_*)$ on $\varphi^{n}(X)$
is equal to the length of $\gamma_*$  on $X$ 
and so is  less than  $L$.
By Epstein-Marden-Markovic and Bridgeman the nearest point retraction 
to $\partial^+ C_n$ is $5$-bilipschitz and applying this to $\gamma_*$
we obtain $\gamma_{**} \subset \partial^+ C_n$ of length at most $5L$.
Now, since the injectivity radius is bounded below by $\epsilon_0$,
 the Morse Lemma tells us that
$\gamma_{**}$
stays within an $R$-regular neighborhood of
the closed geodesic in its homotopy class namely $\varphi^{-n}(\gamma)$
so that,
\begin{equation*} 
	d(\partial^+ C_n, \varphi^{-n}(\gamma)) 
	\leq d(\gamma_{**}, \varphi^{-n}(\gamma))  \leq R.  
\end{equation*} 
To prove the lower bound one chooses 
 a piecewise geodesic path  joining $\varphi^{-n}(\gamma)$
 to $\varphi^{-k}(\gamma)$ and passing via $\partial^+ C_n $.
 The length of this path gives an upper bound for 
 $d(\varphi^{-n}(\gamma), \varphi^{-k}(\gamma)) $
\begin{equation*} 
	d(\varphi^{-n}(\gamma), \varphi^{-k}(\gamma))   
	\leq d(\partial^+ C_n, \varphi^{-n}(\gamma)) + 
	\text{diam} \, \partial^+ C_n + d(\partial^+ C_n, \varphi^{-k}(\gamma))  + L 
\end{equation*} 
so that 
\begin{align*}
	d(\partial^+ C_n, \varphi^{-k}(\gamma)) 
	& \geq d(\varphi^{-n}(\gamma), \varphi^{-k}(\gamma))- R  
		- \text{diam} \, \partial^+ C_n - L \\
	& \geq \left( \frac{1}{E} |n-k| {\rm ent}\, \varphi - F \right)
		- R  - 5 \,\text{diam} X  - L
\end{align*}
using (\ref{linear growth}) and the fact that  $X$  and  $\partial^+ C_n$  are 5-bilipschitz.
Thus the distance from  $\varphi^{-k}(\gamma)$  to  $\partial^+ C_n$
is roughly  $|n-k|$.
Replacing  $\varphi^{-n}(\gamma)$
by  $\varphi^{n}(\gamma)$  and applying the same reasoning, 
one obtains an analogous lower bound for the distance from 
$\varphi^{-k}(\gamma)$  to  $\partial^- C_n$
in terms of  $|n +k|$.
Combining the two bounds yields the required lower bound for 
the depth of  $\varphi^{k}(\gamma) $ 
in terms of  $\min{(|n-k|,|n+k|)} = n - |k|$.

The upper bound is proved in the same way.
\end{proof} 

We will now apply this to prove:

\begin{theorem}[see Brock \cite{Brock}]\label{Thm:Brock} 
Suppose  $\varSigma$  is compact, 
then 
\begin{equation}\label{Eq:vol N vs vol C}  
	|{\rm vol} \, C(\varphi^{-n} X, \varphi^n X) - 2n \, {\rm vol} \, N_{\varphi}  |
\end{equation} 
is uniformly bounded.
\end{theorem}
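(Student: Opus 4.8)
The plan is to compare the hyperbolic convex core $C_n$ with the infinite cyclic cover $\widetilde{N}_\varphi \cong \varSigma \times \mathbb{R}$ of the mapping torus, which carries a complete hyperbolic metric invariant under the covering translation $\tau$ induced by $\varphi$; since $\widetilde{N}_\varphi/\langle\tau\rangle = N_\varphi$, any fundamental domain for $\langle\tau\rangle$ has hyperbolic volume exactly ${\rm vol}\,N_\varphi$. The manifolds $Q_n$ converge geometrically to $\widetilde{N}_\varphi$, and the essential point is that geometric inflexibility (McMullen \cite{McMullen}, Brock--Bromberg \cite{BB}) upgrades this convergence to a quantitative statement: on the part $W_n = \{ p \in C_n : d(p, \partial C_n) \geq D_0 \}$ lying at depth at least a fixed $D_0$, there is a diffeomorphism $F_n : W_n \to \widetilde{N}_\varphi$ onto a region $\widetilde{W}_n$ whose pointwise bilipschitz constant is $1 + O(e^{-c\,d(p)})$, where $c > 0$ is independent of $n$ and $d(p)$ is the depth of $p$. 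Throughout, the uniformly bounded geometry furnished by Minsky's model keeps all constants independent of $n$.

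First I would dispose of the shallow part. The boundary $\partial C_n$ has two components, each a copy of $\varSigma$ whose path metric is $5$-bilipschitz to the fixed-area ($2\pi|\chi(\varSigma)|$) Poincar\'e metric on $\varphi^{\pm n} X$; a collar of fixed depth $D_0$ over such a boundary has volume bounded independently of $n$, so ${\rm vol}(C_n \setminus W_n) = O(1)$ and it suffices to estimate ${\rm vol}\,W_n$. Next I would count fundamental domains and control the distortion. The depth estimate (\ref{depth growth}) places the marker geodesic $\varphi^{k}(\gamma)$ at depth $\approx (n-|k|)\,{\rm ent}\,\varphi$, so the threshold $d \geq D_0$ excludes only $O(1)$ values of $k$ at each end; thus $\widetilde{W}_n$ contains all but a bounded number of the $2n$ slabs of $\widetilde{N}_\varphi$ cut off by consecutive $\tau$-translates, giving ${\rm vol}\,\widetilde{W}_n = 2n\,{\rm vol}\,N_\varphi + O(1)$.

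To compare ${\rm vol}\,W_n$ with ${\rm vol}\,\widetilde{W}_n$ I would write $|{\rm vol}\,W_n - {\rm vol}\,\widetilde{W}_n| \leq \int_{W_n} |{\rm Jac}\,F_n - 1|\,dV = \int_{W_n} O(e^{-c\,d(p)})\,dV(p)$, using that a $(1+\delta)$-bilipschitz map has $|{\rm Jac}-1| = O(\delta)$ in dimension three. Organizing the integral by unit-depth slabs, whose volumes are uniformly bounded because one unit of depth corresponds to bounded progress along the axis over a fiber of bounded area, dominates it by a convergent geometric series $\sum_{s \geq 0} O(e^{-cs}) = O(1)$. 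Combining the three estimates yields ${\rm vol}\,C_n = 2n\,{\rm vol}\,N_\varphi + O(1)$, which is the assertion.

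The crux, and the step I expect to be the main obstacle, is the inflexibility input: producing the comparison map $F_n$ whose bilipschitz distortion decays like $e^{-c\,d(p)}$ toward the interior. This is exactly where McMullen's inflexibility theorem and the Brock--Bromberg refinement enter, and where Minsky's bilipschitz model is invoked to guarantee the uniform lower bound $\epsilon_0$ on injectivity radius that makes the inflexibility constants uniform in $n$. Once the exponential decay of the distortion is in hand, it is the \emph{summability} of this decay, rather than the mere geometric convergence $Q_n \to \widetilde{N}_\varphi$, that converts the error into an $O(1)$ term rather than an $o(n)$ one, and I would be careful to use precisely the convergent geometric series.
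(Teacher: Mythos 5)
Your proposal follows essentially the same route as the paper: the same deep/shallow decomposition of the convex core, the same inflexibility input (a comparison map to $\widetilde{N}_\varphi$ whose bilipschitz distortion decays exponentially in depth, with constants made uniform in $n$ via Minsky's model), the same use of the depth estimate (\ref{depth growth}) for the marker geodesics $\varphi^k(\gamma)$, and the same convergent geometric series to absorb the distortion error into an $O(1)$ term. The only real difference is bookkeeping: you estimate $|{\rm vol}\,W_n - {\rm vol}\,\widetilde{W}_n|$ by a single Jacobian integral organized into unit-depth slabs, whereas the paper proves the lower and upper bounds separately, summing over pulled-back fundamental domains $g_n^{-1}(\varphi^k(\Delta))$ for the former and covering $C_n$ by $V_n$ together with a $T$-neighborhood of $\partial C_n$ for the latter.

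One step is elided. From ``$\widetilde{W}_n$ contains all but boundedly many of the $2n$ slabs'' you conclude ${\rm vol}\,\widetilde{W}_n = 2n\,{\rm vol}\,N_\varphi + O(1)$, but that containment only yields the lower bound ${\rm vol}\,\widetilde{W}_n \geq 2n\,{\rm vol}\,N_\varphi - O(1)$. You also need that $\widetilde{W}_n = F_n(W_n)$ is \emph{contained in} at most $2n + O(1)$ translates of the fundamental domain; without this, the upper bound on ${\rm vol}\,C_n$ is not established. This is precisely what the paper's covering argument supplies: using the upper half of (\ref{depth growth}) it shows that every point of $C_n$ outside the central $2(n-A_3)$ translates lies within a uniformly bounded distance $T$ of $\partial C_n$, so that $C_n = N_T(\partial C_n) \cup V_n$ with the first piece of uniformly bounded volume. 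The fix is available with the tools you already have on the table (the upper depth bound for the $\varphi^k(\gamma)$ plus the fact that every point of $C_n$ is at bounded distance from some marker geodesic, which comes from the model), but as written the two-sided claim does not follow from the one-sided containment you prove.
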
 

Our strategy is, following McMullen and Brock-Bromberg,
to  decompose the  convex core 
into a \textit{deep part} and a \textit{shallow part}.
We first show that the shallow part is ``negligible" then, 
by geometric inflexibility, 
we see that the deep part is ``almost isometric" to a large chunk of
the infinite cyclic cover  $\widetilde{N}_{\varphi}$
which we can explicitly describe.
Consequently  the volume of the deep part grows like  $2n \, {\rm vol} \, N_{\varphi}$.  

\begin{proof} 
Recall that 
the  existence of Minsky's model (see preceding paragraph)
for the convex core of  $Q_n = QF(\varphi^{-n}X, \varphi^{n}X)$
guarantees  a universal
lower bound for injectivity radii  of the family  $\{Q_n\}$.
This will simplify the argument that we present below.

Let  $d>0$  and define the \textit{$d$-deep part of  $Q_n$} to be,  
\begin{equation*} 
	D_n(d) := \{ x \in C_n \, ; \, d(x, \partial C_n) \geq  d \}.
\end{equation*}
Note that this is a proper subset of the convex core  $C_n \subset Q_n$.
Moreover,  
it follows from (\ref{depth growth})  that,  
for fixed  $d$  and  $n$  sufficiently large, 
$D_n(d) $ is non empty and that its width grows linearly in  $n$
(by \textit{width} we mean that the minimum distance 
connected components of  $Q_n \setminus D_n(d)$).
Finally, 
we define the \textit{shallow part of $C_n$} to be  the complement 
of the deep part, that is, $C_n - D_n(d)$.  

Let  $X \in \mathcal{T}$  be a point on the axis of  $\varphi$.
A slight modification of the proof of 
Theorem 8.3 of Brock and Bromberg \cite{BB}
yields:
given  $d$  sufficiently large,
there are constants  $N, K_1, K_2>0$  such that for all  $n > N(d)$
there is a diffeomorphism  $g_n : D_n(d) \to \widetilde{N}_{\varphi}$  
with bilipschitz distortion at a point  $x \in D_n(d)$  less than
\begin{equation}\label{epsilon bound}
	1 + \exp(-K_1 d(\partial C_n,x) + K_2).
\end{equation}
and where the constants $K_1,K_2$ depend on $\epsilon_0$,
that is the lower bound on the injectivity radii of the $Q_n$,
 and $\chi(\varSigma)$.
We have given a simplified statement of a more general result
 they obtain  because we are working in a geometrically bounded context.
In order to  prove  (\ref{Eq:vol N vs vol C})
we must obtain a description of  $E_n := g_n(D_n(d))$  and, 
in particular, 
estimate the number of translates of a well-chosen fundamental domain
that are contained in  $E_n$.
We begin by estimating how many copies of a given (short, simple) 
closed curve are contained in  $E_n$.
To facilitate the exposition, 
we will define
\begin{equation*} 
	\epsilon(H):= \exp(-K_1 d(\partial C_n, H)  + K_2).
\end{equation*} 
where  $H$  is a subset of  $C_n$.

\vspace{.2in}
\begin{figure}[h]
\begin{center}
\includegraphics[scale=.4]{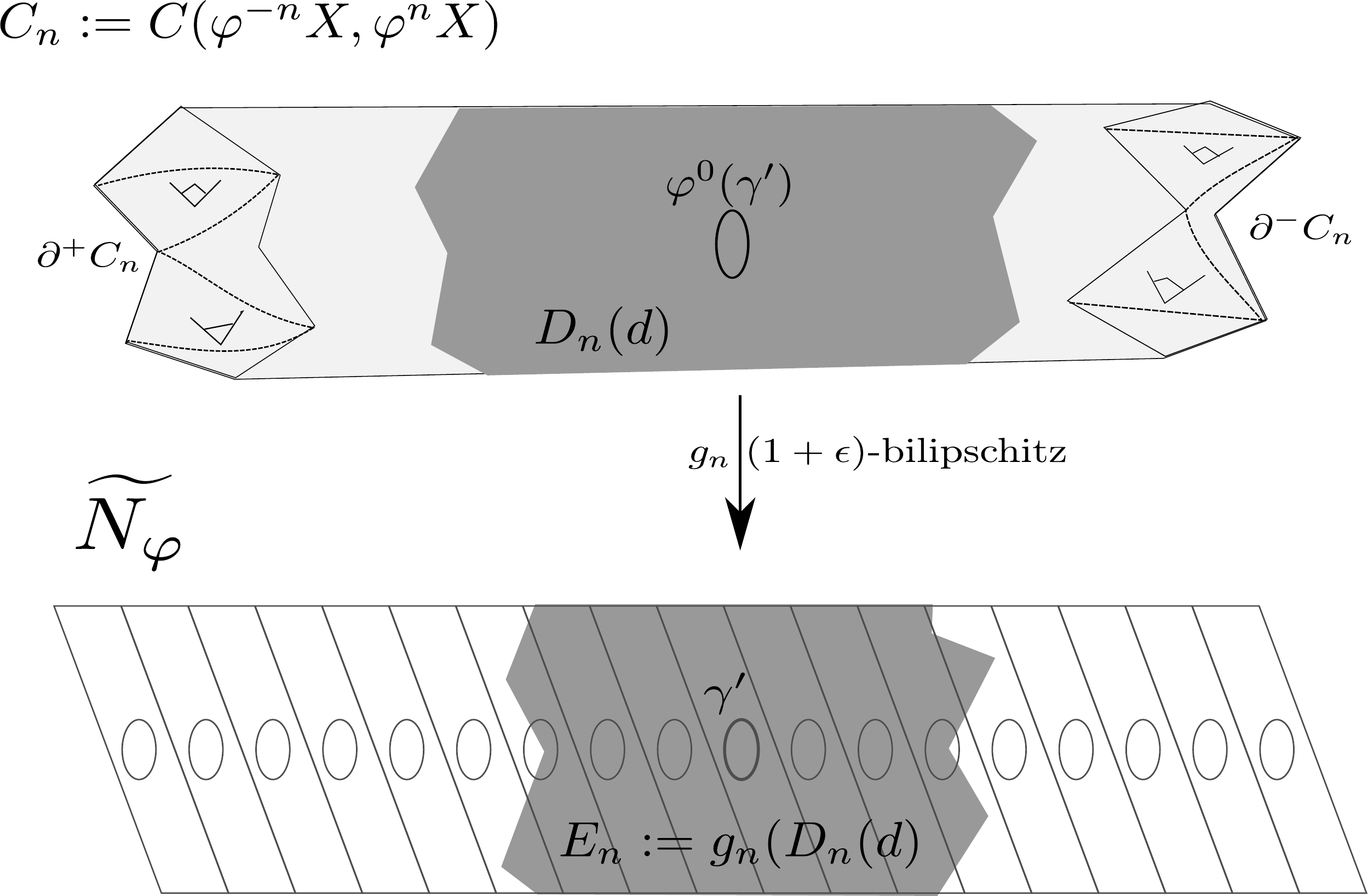} 
\end{center}
\caption{The map from the deep part to the infinite cyclic cover
which is shown tessellated by $\phi$-translates of a fundamental domain
 (depicted as parallograms)
each containing a closed geodesic $\varphi^k(\gamma')$.
}
\end{figure}

Choose a homotopy class  $\gamma_*$  of a simple closed curve on  $X$, 
and denote the  geodesic representative of  $\varphi^k (\gamma)_*$  in  $C_n$  
by  $\varphi^k(\gamma)$.
In particular, 
there is a collection of  $2n+1$  closed geodesics in  $C_n$,  
\begin{equation*} 
	\Gamma_n := \{ \varphi^k(\gamma) \, : \, -n \leq k \leq n \}.   
\end{equation*}  
By Minsky \cite{Minsky} the  lengths of the geodesics in  $\Gamma_n$  
are uniformly bounded (i.e. not depending on $n$) from above by 
some $L>0$.

Consider the subset of geodesics belonging to  $\Gamma_n$ 
that are not contained in the deep part $D_n(d)$, 
or equivalently,  
the values of  $k$  such that  $d(\partial C_n, \varphi^{k}(\gamma)) < d$.
By  (\ref{depth growth}) one has the inequality
\begin{equation} 
	\frac{1}{E} ( n-|k|)\,{\rm ent}\, \varphi - F_1   
	\leq d(\partial C_n, \varphi^{k}(\gamma)) < d 
\end{equation} 
for  $F_1>0$  and  $E>1$  which do not depend on $n$.
One can explicitly compute  $A_1 > 0$  depending on  the depth
$d$, 
$\varphi$  and $X$    (but not on  $n$)
such that the number of values of $k$  which satisfy this inequality,
hence the number of curves in  $\Gamma_n$  not contained in  $D_n(d)$,
is less than $A_1$.

Thus the deep part  $D_n(d)$  contains at least  $2n+1-A_1$ 
members of  $\Gamma_n$  and the image  $E_n$   
contains the same number of  $g_n(\varphi^{k}(\gamma))$.
Our objective is to ``promote" each of these latter  curves to a fundamental domain
for the action of  $\varphi$ by translation   on  $\widetilde{N}_{\varphi}$
contained in  $E_n$.  
The curve  $g_n(\gamma)$  is homotopic to a closed  geodesic 
$\gamma' \subset \widetilde{N}_\varphi$.
Since the length of  $\gamma'$  is bounded below ($N_\varphi$ is compact)
and the length of  $g_n(\gamma)$  uniformly bounded above,
by the Morse Lemma stated in point (1) above, 
the curve  $g_n(\gamma)$  is contained in an $R$-neighborhood of  $\gamma'$.
It follows that there exists  $A_2 \geq A_1$  such that if  $n -|k| > A_2$
then $\varphi^k(\gamma)$  and  $g_n^{-1} (\varphi^k(\gamma') )$  are 
contained in  $D_n(d)$.
 
Now define
\begin{equation} 
	\Delta
	:= \{  x, d ( x, \gamma') \leq d ( x, \varphi^{\pm 1} (\gamma') \} \subset \widetilde{N}_\varphi. 
\end{equation} 
Clearly, 
the interior of  $\Delta$  is a fundamental domain 
for the action of  $\varphi$  on  $\widetilde{N}_\varphi$  and 
the diameter of  $\Delta$  is bounded since  $N_\varphi$  is compact.  
Moreover,
\begin{align*} 
	& d(\partial C_n , \varphi^k(\gamma)) \\ 
	& \quad  \leq  d(\partial C_n , g_n^{-1} (\varphi^k(\Delta) \cap E_n))  
		+ \text{diam}\, g_n^{-1} (\varphi^k(\Delta) \cap E_n) + 
			 d(  g_n^{-1} (\varphi^k(\gamma') ), \varphi^k(\gamma)).
\end{align*}  
So that, 
if  $g_n^{-1}(\varphi^k(\gamma)) \subset D_n(d)$  then 
\begin{align*}
	& d(\partial C_n , g_n^{-1} (\varphi^k(\Delta) \cap E_n)) \\ 
	& \qquad \quad \geq  d(\partial C_n, \varphi^k(\gamma)) 
		- (1+\epsilon) \text{diam} \Delta
		- d(  g_n^{-1} (\varphi^k(\gamma') ), \varphi^k(\gamma))  \\
	& \qquad \quad \geq  d(\partial C_n, \varphi^k(\gamma))
		- (1+\epsilon) \,(\text{diam} \Delta +  R).  
\end{align*}
where $\epsilon = \epsilon(D_n(d))$. 
Consequently there exists  $A_3 \geq A_2$  such that if
$n -|k| \geq A_3$  then  $g_n^{-1}(\varphi^k(\Delta)) \subset D_n(d)$,
that is,
\begin{equation}\label{subsets}
	\bigcup_{|k| < n - A_3}  g_n^{-1}(\varphi^k(\Delta)) \subset D_n(d).
\end{equation}
From the above estimate one also obtains 
an explicit ``linear" lower bound for the depth 
by using the information contained in  Minsky's model,  
$g_n^{-1} (\varphi^k(\Delta) \cap E_n)$,   
\begin{equation}\label{lingro}
	d(\partial C_n , g_n^{-1} (\varphi^k(\Delta) \cap E_n))  
	\geq  \frac{1}{E} \, 	 (n - |k|)  {\rm ent} \, \varphi - F_1.
 \end{equation}
The inclusion (\ref{subsets}) yields a lower bound  for the volume as follows
\begin{align*}
	{\rm vol} \,  D_n(d) 
	&  \geq  \sum_{|k| \leq  n - A_3}  {\rm vol}\,  g_n^{-1}(\varphi^k(\Delta)) \\
	&  \geq  \sum_{|k| \leq n - A_3}  (1+ \epsilon(g_n^{-1}(\varphi^k(\Delta)))^{-3} {\rm vol} \,  \varphi^k(\Delta) \\
	&  \geq  \sum_{|k| \leq n - A_3}  (1 - 4\epsilon(g_n^{-1}(\varphi^k(\Delta))) {\rm vol} \,  \varphi^k(\Delta) \\
	&  \geq   2(n - A_3) {\rm vol} \,  \Delta 
		- 4({\rm vol} \,  \Delta) \sum_{|k| \leq  n - A_3}  \epsilon(g_n^{-1}(\varphi^k(\Delta)))
\end{align*}
The first term is the sum of  $2n  {\rm vol} \,  \Delta$  plus a quantity which 
does not depend on  $n$  and,
by (\ref{lingro}),
the second is bounded above by the sum of a geometric series.  

In order to bound the volume from above
we cover  the convex core by two sets.
Define,
$$
\begin{array}{rcl}
	 V_n &:=& \bigcup_{|k| \leq  n - A_3}  g_n^{-1}(\varphi^k(\overline{\Delta}))\\	 
	 &&\\
	 S_n  &: =& g_n^{-1}(\varphi^{-n + A_3} (\Delta)) 
\cup g_n^{-1}(\varphi^{n - A_3} (\Delta)).
\end{array}
$$
so that one obtains the convex core as the union
$V_n \cup (S_n \cup (C_n \setminus V_n))$.
Observe  that $\Delta$ separates $N_\varphi$
so $S_n$ separates $C_n$
and the  complement of
consists of 3 components;
one containing $\partial^+ C_n$,
another $\partial^- C_n$ 
and the remaining component 
$g^{-1}(\varphi^k)$ for  $k  <  |n| - A_3$.
Clearly,
the  depth of a point contained in the 
the components that contain the boundary $\partial C_n$
is bounded from above by the maximum depth of a point in
$S_n$.
Let us give an explicit upper bound using (\ref{depth growth}).
For $x \in g^{-1}(\varphi^k (\Delta) )$ one has 
$$d(\partial^- C_n , x) 
\leq d(\partial^- C_n , \varphi^k(\gamma)) + d( \varphi^k(\gamma),x ) 
\leq 	E( n-|k|)\,{\rm ent}\, \varphi  + F_1 + R + (1+\epsilon) \text{diam}\, \Delta.
 $$
 So that, setting  $k= A_3$ in the expression on the right,
 we obtain   the required  uniform bound, w
(it is easy to see that  $g^{-1}(\varphi^k (\Delta) )$
satisfies the same upper bound).
It follows that  the complement in the convex core of 
$V_n$ is contained in a $T$-regular neighborhood of the boundary 
$\partial C_n$
that is,
\begin{equation*} 
	C_n = N_T(\partial C_n)\, \cup  V_n.
\end{equation*} 
From this we obtain the upper bound for volume
\begin{equation*} 
	{\rm vol} \,  C_n \leq  {\rm vol} \,  N_T(\partial C_n) + {\rm vol} \,   
	V_n.
\end{equation*} 
The diameter of each of the  components of  $ N_T(\partial C_n)$
is uniformly bounded  bounded  (by $T + 5 \,\text{diam} X$)
  so the first term is uniformly bounded in  $n$  and, 
by a similar calculation to the above for the lower bound,
the second term is seen to be bounded from above by 
$2n  {\rm vol} \,  \Delta$  plus a constant.
\end{proof}

\bigskip

\end{document}